\renewcommand{\l}{\lambda}
\newcommand{\D}{\displaystyle}
\newtheorem{thm}{Theorem}[section]
\newtheorem{lem}{Lemma}[section]
\newtheorem{prop}{Proposition}[section]
\theoremstyle{definition}
\theoremstyle{remark}
\newtheorem{rem}{Remark}[section]
\numberwithin{equation}{section}
\newcommand{\al}{\alpha}
\newcommand{\bt}{\beta}
\begin{document}

\title{Painlev\'e VI and Hankel determinants for the generalized Jacobi weight}
\author{D. Dai \footnotemark[1] \\
\small{\textit{Department of Mathematics, City University of Hong Kong,}} \\
\small{\textit{Tat Chee Avenue, Kowloon, Hong Kong}} \\
\small{dandai@cityu.edu.hk}\\
\vspace{1mm}
and\\
\vspace{1mm} L. Zhang \footnotemark[2]
\\
\small{\textit{Department of Mathematics, Katholieke Universiteit Leuven,}} \\
\small{\textit{Celestijnenlaan 200 B, 3001 Leuven, Belgium}}\\
\small{lun.zhang@wis.kuleuven.be}} \maketitle {
\renewcommand{\thefootnote}{\fnsymbol{footnote}}
\footnotetext[1]{The work of this author was partially supported
by the General Research Fund of Hong Kong \\
(Project No. 9041431)}

\footnotetext[2]{Corresponding author} }

\begin{abstract}
We study the Hankel determinant of the generalized Jacobi weight
$(x-t)^{\gamma}x^\alpha(1-x)^\beta$ for $x\in[0,1]$ with $\alpha,
\beta>0$, $t < 0 $ and $\gamma\in\mathbb{R}$. Based on the ladder
operators for the corresponding monic orthogonal polynomials
$P_n(x)$, it is shown that the logarithmic derivative of Hankel
determinant is characterized by a $\tau$-function for the Painlev\'e
VI system.
\end{abstract}



\section{Introduction and statement of results}\label{introdutcion}

Let $P_n(x)$ be the monic polynomials of degree $n$ in $x$ and
orthogonal with respect to the generalized Jacobi weight $w(x;t)$;
that is
\begin{equation} \label{orthogonality}
\int_0^1 P_m(x) P_n(x) w(x) dx = h_n \delta_{m,n}, \quad h_n>0,
\quad m,n=0,1,2,\cdots,
\end{equation}
where
\begin{equation} \label{pn-formula}
P_n(x) = x^n + \textsf{p}_1(n) x^{n-1} + \cdots
\end{equation}
and
\begin{equation}\label{w-def}
w(x):=w(x;t)=(x-t)^{\gamma}x^\alpha(1-x)^\beta, \qquad  x\in[0,1],
\end{equation}
with $\alpha, \beta>0$, $t < 0$ and $\gamma\in\mathbb{R}$. (In what
follows, we often suppress the $t$-dependence for brevity. We
believe that this will not lead to any confusion.) An immediate
consequence of the orthogonality condition is the three-term
recurrence relation
\begin{equation} \label{recurrence}
x P_n(x) = P_{n+1}(x) + \alpha_n P_n(x) + \beta_n P_{n-1}(x),
\end{equation}
where the ``initial" conditions are taken to be $P_0(x) := 1$ and
$\beta_0 P_{-1}(x) :=0$. Obviously, when $t \to 0^-$, these
polynomials are reduced to the classical Jacobi polynomials up to
some shift and rescaling, whose properties are well-known; see
Szeg\H{o} \cite{s}. In the literature, although people may use
slightly different definitions, the generalized Jacobi polynomials
have been studied from many points of view; for example, see
\cite{martinez,gammel,nevai,Vanlessen,Vertesi}. In particular,
Magnus \cite{magnus} showed that an auxiliary quantity occurring in
his study satisfies the Painlev\'e VI equation for certain
parameters.


In this paper, we are concerned with the Hankel determinant for the
generalized Jacobi weight
\begin{eqnarray}\label{Dn(t)}
D_n(t) & = & \det \left( \int_0^1 x^{j+k} \, w(x;t) dt \right)_{j,k=0}^{n-1} \nonumber \\
& = & \D \frac{1}{n!} \int_0^1 \cdots \int_0^1 \prod_{i<j}(x_i -
x_j)^2 \prod_{k=1}^n w(x_k;t) dx_k
\end{eqnarray}
with $w(x;t)$ given in \eqref{w-def}. Our motivation for this
research mainly arises from the close relation between Hankel
determinants and random matrix theory, which is of interest in
mathematical physics. Indeed, the Hankel determinant defined in
\eqref{Dn(t)} can be viewed as the partition function for the
unitary ensemble with eigenvalue distribution
\begin{equation}
\prod_{i<j}(x_i - x_j)^2 \prod_{k=1}^n
(x_k-t)^{\gamma}x_k^\alpha(1-x_k)^\beta dx_k,
\end{equation}
see the definitive book of Mehta \cite{Mehta} for a discussion of
this topic. The main purpose of this work is to study the properties
of $D_n(t)$ as a function of $t$. More precisely, we are going to
show the logarithmic derivative of Hankel determinant $D_n(t)$ is
characterized by a $\tau$-function for the Painlev\'e VI system. The
appearance of Painlev\'e VI may not be so surprising somehow. As a
matter of fact, it has been already known that, for some special
weight functions, the corresponding Hankel determinants are
connected to the well-known nonlinear ordinary differential
equations -- Painlev\'e equations. In particular, it is first shown
in \cite{haine} that a gap probability in the Jacobi polynomial
ensemble is related to Painlev\'e VI, see also
\cite{BorodinD,ForreW} for further discussion. For the present
general Jacobi case, although it is natural to ``guess" the
existence of such link, the precise form and specific quantity
related to, however, is not clear.

Our approach is based on the ladder operator for orthogonal
polynomials, which has been successfully applied to many other
polynomials before; see \cite{basor-chen,basorCE,cd,ci1,ci2,ci}. The
main result is the following.
\begin{thm} \label{thm-hn}
Let $H_n$ be the logarithmic derivative of the Hankel determinant
with respect to $t$,
\begin{equation} \label{hn-def}
H_n(t):=t(t-1)\frac{d}{dt}\ln D_n(t)
\end{equation}
and denote by
\begin{equation} \label{htil-def}
\widetilde{H}_n:=H_n +d_1 t+d_2
\end{equation}
with
\begin{eqnarray*}
d_1 &= & - n (n + \al + \bt + \gamma)-\frac{(\al + \bt)^2}{4},  \\
d_2 & = & \frac{1}{4} \biggl[ 2n (n + \al + \bt + \gamma) + \bt (\al
+ \bt) - \gamma (\al - \bt) \biggr].
\end{eqnarray*}
Then $\widetilde{H}_n$ satisfies the following Jimbo-Miwa-Okamoto
$\sigma$-form of Painlev\'e VI in \cite{JM,okamoto}
\begin{align}
& \widetilde{H}_n'\biggl(t (t-1) \widetilde{H}_n''\biggr)^2 +
\biggl\{ 2 \widetilde{H}_n' \left( t \widetilde{H}_n' -
\widetilde{H}_n \right) - \widetilde{H}_n'^2 - \nu_1 \nu_2 \nu_3
\nu_4 \biggr\}^2
\nonumber\\
& = \left(\widetilde{H}_n' + \nu_1^2 \right)\left(\widetilde{H}_n' +
\nu_2^2 \right)\left(\widetilde{H}_n' + \nu_3^2
\right)\left(\widetilde{H}_n' + \nu_4^2 \right) \label{sigmaform}
\end{align}
with
\begin{equation*}
\nu_1 = \frac{\al + \bt}{2}, \quad \nu_2 = \frac{\bt - \al}{2},
\quad \nu_3 = \frac{2n + \al + \bt }{2}, \quad \nu_4 = \frac{2n +
\al + \bt +2\gamma}{2}.
\end{equation*}

\end{thm}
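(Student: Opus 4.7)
The plan is to exploit the ladder-operator machinery for the monic orthogonal polynomials $P_n$ associated with $w(x;t)$, as the authors indicate. Writing $v(x)=-\ln w(x)$ so that $v'(x)=-\gamma/(x-t)-\al/x+\bt/(1-x)$ is rational with simple poles at $0,1,t$, one derives in the standard way the lowering/raising relations
\[
P_n'(x) = -B_n(x) P_n(x) + A_n(x) P_{n-1}(x),
\]
with
\[
A_n(x) = \frac{1}{h_n} \int_0^1 \frac{v'(x)-v'(y)}{x-y} P_n^2(y)\,w(y)\,dy, \qquad B_n(x) = \frac{1}{h_{n-1}}\int_0^1\frac{v'(x)-v'(y)}{x-y}P_n(y)P_{n-1}(y)\,w(y)\,dy,
\]
plus boundary corrections from the endpoints $0$ and $1$. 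Because $v'$ only has poles at $0,1,t$, the functions $A_n$ and $B_n$ decompose into partial fractions with poles exclusively at those three points, introducing a small set of $t$-dependent auxiliary quantities (the residues), which I will call generically $R_n, r_n$, together with their counterparts at the other two poles.

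Next, the two compatibility identities $(S_1)$ and $(S_2)$ for the ladder operators, together with the sum rule $(S_2')$ obtained by iterating $\sum_{k=0}^{n-1}$, yield a closed nonlinear system relating the recurrence coefficients $\alpha_n,\beta_n$ of \eqref{recurrence} to these residues. Matching the partial-fraction expansions of $A_n, B_n$ pole-by-pole (and at $x=\infty$) produces explicit algebraic identities expressing $\alpha_n,\beta_n$ as rational functions of the residues, while the $t$-derivative of the ladder relations gives a first-order ODE system for the residues themselves. The bridge to $D_n(t)$ is the Toda-type identity
\[
\frac{d}{dt}\ln h_n(t)=-\frac{1}{h_n}\int_0^1 P_n^2(x)\,\partial_t v(x)\,w(x)\,dx,
\]
which, after substituting $\partial_t v(x)=-\gamma/(x-t)$ and recognising the resulting integral as one of the residues in $A_n$, expresses $d\ln h_n/dt$ in terms of those auxiliary quantities. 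Summing over $k=0,\ldots,n-1$ and using the sum rule to telescope the series produces a compact closed-form expression for $H_n(t)=t(t-1)\,d\ln D_n(t)/dt$ purely in terms of the level-$n$ auxiliary data.

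From this explicit formula for $H_n$, and hence for $\widetilde{H}_n$ after the affine shift in \eqref{htil-def}, one differentiates once and twice in $t$ and uses the ODEs for the residues to eliminate them in favour of $\widetilde{H}_n$, $\widetilde{H}_n'$ and $\widetilde{H}_n''$. The resulting polynomial identity should then match the Jimbo--Miwa--Okamoto $\sigma$-form \eqref{sigmaform}; the specific shifts $d_1 t+d_2$ are precisely those needed to remove the linear and constant drifts that would otherwise spoil the canonical form, and the four parameters $\nu_i$ will emerge from the exponents $\alpha,\beta,\gamma$ and the index $n$ via the residue at infinity of $A_n$. I expect the main obstacle to be exactly this last algebraic elimination: the compatibility system involves several residues coupled nonlinearly, and collapsing the large rational expression obtained by eliminating them into the compact quartic $\sigma$-form, with the correct signs and $\nu_i$'s, is where the bookkeeping becomes delicate. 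A secondary subtlety is choosing the right linear combinations of residues so that the telescoping in the Toda step actually closes at level $n$, rather than leaving a residual dependence on all intermediate levels.
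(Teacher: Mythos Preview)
Your outline is correct and follows essentially the paper's route: ladder operators, partial-fraction residues of $A_n,B_n$ at $0,1,t$ (the paper calls them $R_n,R_n^*,r_n,r_n^*$), the compatibility conditions $(S_1),(S_2),(S_2')$, the Toda identity for $h_n'/h_n$, and the sum rule $(S_2')$ to close $H_n$ at level $n$. One remark on your flagged ``main obstacle'': the paper does not eliminate by brute-force differentiation of $H_n$. It first inverts the closed formula for $H_n$ together with its $t$-derivative (using the relation $t\,(r_n^*)'=(t-1)\,r_n'$ coming from $(S_2)$) to write $r_n$ and $r_n^*$ \emph{linearly} in $H_n,H_n'$; then it combines an algebraic identity from $(S_2')$ with the Toda equation for $\beta_n$ to produce two separate expressions, one for $R_n$ and one for $1/R_n$, each rational in $r_n,r_n^*,r_n'$. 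Multiplying these eliminates $R_n$ in a single stroke, leaving one equation in $r_n,r_n^*,r_n'$---hence in $H_n,H_n',H_n''$---which after the affine shift is exactly \eqref{sigmaform}. This $R_n\cdot(1/R_n)=1$ device is what keeps the bookkeeping manageable. (Minor note: with $\alpha,\beta>0$ the weight vanishes at $0$ and $1$, so no boundary corrections appear in $A_n,B_n$.)
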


\begin{rem}

Due to the symmetric form of \eqref{sigmaform}, the choice of
$\nu_1$, $\nu_2$, $\nu_3$ and $\nu_4$ is not unique.

\end{rem}

\begin{rem}

Although we assume $t<0$ in the definition \eqref{w-def} of the
weight function $w(x)$, Theorem \ref{thm-hn} also holds for $t>1$ if
$(x-t)^\gamma$ is substituted by $(t-x)^\gamma$. One may expect this
theorem is valid for all real $t \neq 0 ,1$ when $(x-t)^\gamma$ is
replaced by $|x-t|^\gamma$, which is similar to what has been
studied by Chen and Feigin \cite{cf}. Unfortunately, we can not
prove it at this moment.

\end{rem}

\begin{rem}

If $\gamma =0$, then $(x-t)^\gamma \equiv 1$ and we can readily
reduce the polynomials $P_n(x)$ in \eqref{orthogonality} to the
classical Jacobi polynomials, which are of course $t$ independent.
As a consequence, the formula \eqref{htil-def} provides a trivial
solution for the associated $\sigma-$form in \eqref{sigmaform}.
Moreover, if $\gamma=1$, the Hankel determinant $D_n(t)$ defined in
\eqref{Dn(t)} is actually a polynomial in $t$ of degree $n$, and
orthogonal with respect to the ``shifted'' Jacobi weight
$t^\al(1-t)^{\beta}$ on $[0,1]$. By the classical theory of Jacobi
polynomials (cf. \cite{s}), $D_n(t)$ satisfies the following
second-order differential equation
\begin{equation}
t(1-t)D_n''-[(2+\al+\beta)t-\al-1]D_n'+n(n+\alpha+\beta+1)D_n=0.
\end{equation}
Hence, if we denote by $u(t):=\frac{d}{dt}\ln D_n(t)$, it is easily
seen that $u(t)$ is a solution of the following Riccati equation
\begin{equation}
t(1-t)u'=t(t-1)u^2+[(2+\al+\beta)t-\al-1]u-n(n+\alpha+\beta+1)
\end{equation}
for $t<0$ in this special case. Moreover, one can verify that
$t(t-1)u(t)+d_1t+d_2$ is a rational solution to the associated
$\sigma-$form in \eqref{sigmaform}.

\end{rem}

From \eqref{recurrence}, it is easily seen that $\beta_n =
h_n/h_{n-1}$. Since
\begin{align} \label{dn-def}
D_n(t) = \prod_{j=0}^{n-1} h_j
\end{align}
(see Eq.(2.1.6) in \cite{ismailbook}), we can express $\bt_n$ in
terms of the Hankel determinant as follows
\begin{equation}
\bt_n = \frac{D_{n-1} D_{n+1}}{D_n^2}.
\end{equation}
Therefore, it is also expected that there exists certain relation
between $\bt_n$ and the Painlev\'e VI equation. Indeed, as a
by-product of our main theorem, we find a first-order differential
equation for $\beta_n$, whose coefficient is closely related to the
Painlev\'e VI equation.
\begin{thm} \label{thm-beta}
The recurrence coefficient $\beta_n$ satisfies a first-order
differential equation as follows
\begin{equation}
t\frac{d}{dt} \, \beta_n  = (2+R_{n-1}-R_n) \, \beta_n,
\end{equation}
where $R_n$ is related to the Painlev\'e VI equation in the
following way. Let
\begin{equation} \label{wn-rn}
W_n(t) := \frac{(t-1)R_n(t)}{2n+\al+\bt+ \gamma+1} + 1.
\end{equation}
Then $W_n(t)$ satisfies the Painlev\'e VI equation
\begin{align}
W_n'' & = \frac{1}{2} \left( \frac{1}{W_n} + \frac{1}{W_n - 1} +
\frac{1}{W_n -t }\right) (W_n')^2 - \left( \frac{1}{t} + \frac{1}{t
- 1} + \frac{1}{W_n -t }\right) W_n' \nonumber \\
& \quad + \frac{W_n (W_n -1) (W_n - t)}{t^2 (t-1)^2} \left( \mu_1 +
\frac{\mu_2 t}{W_n^2} + \frac{\mu_3(t-1)}{(W_n -1)^2} + \frac{\mu_4
t(t-1)}{(W_n -t )^2} \right) \label{w-ode}
\end{align}
with
\begin{equation*}
\mu_1 = \frac{(2n + \al + \bt + \gamma + 1)^2}{2}, \quad \mu_2 =
-\frac{\al^2}{2}, \quad \mu_3 = \frac{\bt^2}{2}, \quad \mu_4 =
\frac{1-\gamma^2}{2}.
\end{equation*}

\end{thm}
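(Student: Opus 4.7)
The plan is to leverage the ladder operator machinery underlying Theorem~\ref{thm-hn}. Since $\ln w(x;t)$ has logarithmic derivative with simple poles at $x=0,1,t$, the ladder operator coefficients $A_n(x;t)$ and $B_n(x;t)$ are rational in $x$ with poles exactly at these three points, and their residues (together with the leading behaviour at infinity) are the natural auxiliary quantities. In this setup, $R_n(t)$ is, up to convention-dependent constants, the residue of $A_n$ at $x=t$, admitting an integral representation proportional to $\frac{\gamma}{h_n}\int_0^1 \frac{P_n^2(x)\,w(x;t)}{x-t}\,dx$.

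To obtain the first-order ODE for $\beta_n$, I would differentiate $\int_0^1 P_n^2\,w\,dx = h_n$ in $t$ and use $\partial_t w = -\gamma w/(x-t)$ to express $t\,\frac{d}{dt}\ln h_n$ as a linear combination of $R_n$ and an $n$-dependent constant $c_n$ whose increment satisfies $c_n - c_{n-1} = 2$. Differencing at indices $n$ and $n-1$ via $\beta_n = h_n/h_{n-1}$ then yields
\begin{equation*}
t\,\frac{d}{dt}\ln\beta_n \;=\; 2 + R_{n-1} - R_n,
\end{equation*}
which is the first assertion of the theorem.

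For the Painlev\'e VI equation, I would invoke the Jimbo--Miwa--Okamoto correspondence \cite{JM,okamoto}, which to any solution of the $\sigma$-form \eqref{sigmaform} associates a solution $q(t)$ of the standard Painlev\'e VI equation \eqref{w-ode}. The proof of Theorem~\ref{thm-hn} should produce, as an intermediate output, explicit expressions for $\widetilde H_n$ and $\widetilde H_n'$ as rational functions of the pair $(R_n,r_n)$ of residues at $x=t$; inverting these relations identifies an affine transformation of $R_n$ with the Hamiltonian $q$-variable of the Painlev\'e VI system. The specific normalization $W_n = \frac{(t-1)R_n}{2n+\al+\bt+\gamma+1}+1$ in \eqref{wn-rn} is exactly the M\"obius change of variable that sends the regular singular points of the resulting ODE to $\{0,1,t,\infty\}$ in the standard form.

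The hard part will be the parameter bookkeeping. Equation \eqref{sigmaform} is invariant under all sign flips $\nu_i \mapsto -\nu_i$ and under permutations of $(\nu_1,\nu_2,\nu_3,\nu_4)$, so it does not by itself single out a preferred assignment to $(\mu_1,\mu_2,\mu_3,\mu_4)$; fixing the correct signs and permutations requires independent input, such as matching the $\gamma=0$ and $\gamma=1$ degenerations discussed in the remarks above, or tracking how the additive constants $d_1,d_2$ in \eqref{htil-def} interact with the affine rescaling defining $W_n$. Once this is pinned down, substitution of $W_n$ into \eqref{w-ode} reduces, via \eqref{sigmaform}, to an algebraic identity yielding the claimed values $\mu_1 = (2n+\al+\bt+\gamma+1)^2/2$, $\mu_2 = -\al^2/2$, $\mu_3 = \bt^2/2$, $\mu_4 = (1-\gamma^2)/2$.
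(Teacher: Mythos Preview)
Your derivation of $t\frac{d}{dt}\beta_n = (2+R_{n-1}-R_n)\beta_n$ is essentially the paper's: it is proved as $(T_2)$ in Lemma~\ref{lem-toda} by differentiating $h_n$ in $t$, using \eqref{r-r3} to rewrite $R_n^*$ in terms of $R_n$ so that $t\frac{d}{dt}\ln h_n = (2n+1+\al+\bt+\gamma)-R_n$, and then differencing. One correction: in the paper's conventions $R_n$ is (minus) the residue of $A_n$ at $z=1$, not at $z=t$; see \eqref{an-new} and \eqref{r*n1-def}. The residue at $z=t$ is $R_n-R_n^*$, which by \eqref{r-r3} is affine in $R_n$, so this does not break anything, but the integral representation you wrote is not the one defining $R_n$.

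For the Painlev\'e VI part your route is genuinely different from the paper's, and it has a gap. The paper does \emph{not} pass through Theorem~\ref{thm-hn} and the Jimbo--Miwa--Okamoto $\sigma\leftrightarrow q$ correspondence. Instead it works directly with the ladder-operator data: substituting \eqref{alpha-rs} into $(T_1)$ and using \eqref{r-r2}, \eqref{r&r-diff} expresses $r_n^*$ in terms of $r_n,r_n',R_n,R_n'$; feeding this into \eqref{R-r-r*3}--\eqref{R-r-r*4} yields a system \emph{linear} in $(r_n,r_n')$ with coefficients depending on $(R_n,R_n')$; the compatibility $\frac{d}{dt}r_n=r_n'$ is then a second-order ODE for $R_n$ alone. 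That ODE factors as a Riccati equation times a second factor; the Riccati solutions are ruled out by the asymptotics \eqref{R0-asy}, and the remaining factor becomes \eqref{w-ode} under \eqref{wn-rn}.

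Your proposal instead assumes that the Okamoto $q$-variable attached to $\widetilde H_n$ equals $W_n$, and treats the remaining work as ``parameter bookkeeping.'' That is the gap: the identification of $q$ with $W_n$ is a functional statement, not a choice of signs on the $\nu_i$. To close it you would have to take Okamoto's explicit formula expressing $q$ rationally in $(\sigma,\sigma',\sigma'')$, substitute $\sigma=\widetilde H_n$, and then compare against the expression for $R_n$ in terms of $H_n,H_n',H_n''$ obtained by composing \eqref{r*-hn}--\eqref{r-hn} with \eqref{R-r-r*3}--\eqref{R-r-r*4}. That comparison is a computation of the same order as the paper's direct derivation, and moreover the permutation/sign symmetry of \eqref{sigmaform} that you correctly flag means several inequivalent Okamoto $q$'s (related by B\"acklund transformations) arise from the same $\sigma$; singling out the one equal to $W_n$ cannot be done by matching parameters alone. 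The paper's direct approach sidesteps both issues by producing the ODE for $R_n$, and hence the $\mu_i$, without reference to the $\sigma$-form.
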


The present paper is organized as follows. In Section \ref{sec-pre},
we give a brief introduction to the ladder operator theory and state
three compatibility conditions $S_1$, $S_2$ and $S_2'$. Based on
these supplementary conditions, we introduce some auxiliary
constants in Section \ref{The analysis of the ladder operator}.
Their relations with other quantities such as the coefficient of
orthogonal polynomials, Hankel determinant, etc., are also derived
for further use. We conclude this paper with the proof of Theorem
\ref{thm-hn} and Theorem \ref{thm-beta} in Sections \ref{Proof of
Theorem 1} and \ref{Proof of Theorem 2}, respectively.


\section{Ladder operators and compatibility conditions} \label{sec-pre}
The ladder operators for orthogonal polynomials has been derived by
many authors with a long history, we refer to
\cite{BonanC,BonanLN,Bonan,ci2,sh} and references therein for a
quick guide. Following the general set-up (see \cite{ci2} for
example), we have the lowering and raising ladder operator for our
generalized Jacobi polynomials $P_{n}(z)$:
\begin{align}
\left( \frac{d}{dz} + B_n(z) \right) P_n(z) & = \beta_n A_n(z) P_{n-1}(z), \label{ladder1} \\
\left( \frac{d}{dz} - B_n(z) - \textsf{v}'(z) \right) P_{n-1}(z) & =
- A_{n-1}(z) P_n(z) \label{ladder2}
\end{align}
with $\textsf{v}(z):=-\ln w(z)$ and
\begin{align}
A_n(z) & := \frac{1}{h_n} \int_0^1 \frac{\textsf{v}'(z) -
\textsf{v}'(y)}{z-y} \ [P_n(y)]^2 w(y) dy,
\label{an-def}\\
B_n(z) & := \frac{1}{h_{n-1}} \int_0^1 \frac{\textsf{v}'(z) -
\textsf{v}'(y)}{z-y} \ P_{n-1}(y) P_n(y) w(y) dy. \label{bn-def}
\end{align}
Note that, $A_n(z)$ and $B_n(z)$ are not independent but satisfy the
following supplementary conditions.
\begin{prop}
The functions $A_n(z)$ and $B_n(z)$ defined in (\ref{an-def}) and
(\ref{bn-def}) satisfy the following compatibility conditions,
$$
B_{n+1}(z) + B_n(z)  = (z- \alpha_n) A_n(z) - \textsf{v}'(z),
\eqno(S_1)
$$
$$
1+ (z- \al_n) [B_{n+1}(z) - B_n(z)] = \beta_{n+1} A_{n+1}(z) -
\beta_n A_{n-1}(z).\eqno(S_2)
$$
\end{prop}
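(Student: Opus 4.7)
The plan is to derive both $S_1$ and $S_2$ purely algebraically, by combining the ladder operators \eqref{ladder1}--\eqref{ladder2} with the three-term recurrence \eqref{recurrence}. Since $A_n(z)$ and $B_n(z)$ are rational functions of $z$ (with poles only at $0$, $1$, $t$), any identity of the form $R(z)\,P_n(z)\equiv 0$ with $R$ rational forces $R\equiv 0$, and this is the mechanism through which each supplementary condition will emerge.

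For $S_1$, I would shift $n \mapsto n+1$ in the raising operator \eqref{ladder2} to obtain
\begin{equation*}
P_n'(z) - B_{n+1}(z) P_n(z) - \textsf{v}'(z) P_n(z) = -A_n(z) P_{n+1}(z),
\end{equation*}
and subtract this from the lowering operator \eqref{ladder1}. The derivative term cancels, leaving
\begin{equation*}
\bigl(B_n + B_{n+1} + \textsf{v}'\bigr) P_n = A_n \bigl(\beta_n P_{n-1} + P_{n+1}\bigr).
\end{equation*}
The three-term recurrence \eqref{recurrence} identifies the bracket on the right as $(z-\alpha_n) P_n$, and dividing by $P_n$ gives $S_1$ at once.

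For $S_2$, I would differentiate \eqref{recurrence} and rearrange to $P_n + (z - \alpha_n) P_n' = P_{n+1}' + \beta_n P_{n-1}'$. Next, I would eliminate $P_n'$ and $P_{n+1}'$ via the lowering operator and $P_{n-1}'$ via the raising operator, producing a linear combination of $P_{n-1}$, $P_n$, $P_{n+1}$ that must vanish identically in $z$. The coefficient of $P_{n-1}$ simplifies, via the freshly-established $S_1$, to $\beta_n B_{n+1}$, so the remaining identity reads
\begin{equation*}
B_{n+1}\bigl(P_{n+1} + \beta_n P_{n-1}\bigr) + \bigl[1 - (z-\alpha_n) B_n - \beta_{n+1} A_{n+1} + \beta_n A_{n-1}\bigr] P_n = 0.
\end{equation*}
A second application of \eqref{recurrence} then replaces $P_{n+1} + \beta_n P_{n-1}$ by $(z-\alpha_n) P_n$, and cancelling $P_n$ throughout yields $S_2$.

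The only real obstacle is bookkeeping: one must keep signs and index shifts straight through several successive substitutions and verify that the coefficient of $P_{n+1}$ (arising from the raising operator acting on a shifted index) is properly absorbed when the last application of the recurrence is made. No integration by parts or endpoint argument is required at this stage, although such ingredients will presumably enter the derivation of the third supplementary condition $S_2'$ announced in the paragraph preceding the proposition.
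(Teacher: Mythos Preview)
Your argument is correct: the derivation of $S_1$ by subtracting the shifted raising operator from the lowering operator, and of $S_2$ by differentiating the recurrence and eliminating derivatives via the ladder relations (using $S_1$ to collapse the $P_{n-1}$ coefficient), goes through exactly as you describe. The cancellation mechanism you invoke---$R(z)P_n(z)\equiv 0$ with $R$ rational forces $R\equiv 0$---is sound since $P_n$ has only finitely many zeros.

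The paper's own proof of this proposition consists solely of a reference to the literature (Bonan--Clark, Bonan--Lubinsky--Nevai, Bonan--Nevai, Shohat, Chen--Ismail), remarking that the recurrence relation and Christoffel--Darboux formula make the derivation a direct calculation. Your approach is in the same spirit but is more economical: you never invoke Christoffel--Darboux, relying only on the three-term recurrence and the ladder operators themselves, and you actually carry out the computation rather than delegating it. One small inaccuracy in your closing comment: the condition $S_2'$ is stated in the proposition \emph{following} this one, not in the paragraph preceding it.
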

\begin{proof}
Using the recurrence relation and the Christoffel-Darboux formulas,
all the formulas (\ref{ladder1}), (\ref{ladder2}), $(S_1)$ and
$(S_2)$ can be derived by direct calculations. We refer to
\cite{BonanC,BonanLN,Bonan,sh} for details. Also, see \cite{ci1,ci2}
for a recent proof.
\end{proof}

From $(S_1)$ and $(S_2)$, we can derive another identity involving
$\sum_{j=0}^{n-1}A_j$ which is very helpful in our subsequent
analysis. We state the result in the following Proposition.

\begin{prop}

$$
B_n^2(z) + \textsf{v}'(z) B_n(z) + \sum_{j=0}^{n-1}A_j(z) =
\beta_{n} A_n(z) A_{n-1}(z). \eqno(S_2')
$$

\end{prop}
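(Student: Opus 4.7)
The plan is to derive $(S_2')$ as an algebraic consequence of $(S_1)$ and $(S_2)$ by a telescoping-sum argument. The guiding observation is that $(S_2)$ already packages the difference $\beta_{j+1}A_{j+1}-\beta_j A_{j-1}$ on its right-hand side, so if we can produce the factor $A_j$ in front of this combination then $\beta_j A_{j-1}(z)A_j(z)$ will telescope in $j$. The correct multiplier that couples $(S_1)$ and $(S_2)$ in this way is $B_{j+1}(z) - B_j(z)$.

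Concretely, I would first rewrite $(S_1)$ at index $j$ as
\begin{equation*}
(z - \alpha_j) A_j(z) \;=\; B_{j+1}(z) + B_j(z) + \textsf{v}'(z)
\end{equation*}
and multiply both sides by $B_{j+1}(z) - B_j(z)$. The right-hand side collapses to the clean first difference $B_{j+1}^2(z) - B_j^2(z) + \textsf{v}'(z)\bigl[B_{j+1}(z) - B_j(z)\bigr]$. On the left-hand side I would invoke $(S_2)$ at the same index $j$ to substitute
\begin{equation*}
(z-\alpha_j)\bigl[B_{j+1}(z)-B_j(z)\bigr] \;=\; \beta_{j+1} A_{j+1}(z) - \beta_j A_{j-1}(z) - 1,
\end{equation*}
yielding the pointwise identity
\begin{equation*}
A_j(z)\bigl[\beta_{j+1}A_{j+1}(z) - \beta_j A_{j-1}(z)\bigr] - A_j(z) \;=\; B_{j+1}^2(z) - B_j^2(z) + \textsf{v}'(z)\bigl[B_{j+1}(z) - B_j(z)\bigr].
\end{equation*}

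The next step is to sum over $j=0,1,\dots,n-1$. The right-hand side telescopes to $B_n^2(z) - B_0^2(z) + \textsf{v}'(z)\bigl[B_n(z) - B_0(z)\bigr]$. Setting $T_j := \beta_j A_{j-1}(z) A_j(z)$, the leading term on the left is $\sum_{j=0}^{n-1}(T_{j+1}-T_j) = T_n - T_0$. To close the argument I would use the standard initial data $B_0(z) \equiv 0$ and $A_{-1}(z) \equiv 0$, both of which follow immediately from the formal convention $P_{-1}(z) = 0$ in the defining integrals \eqref{an-def} and \eqref{bn-def}; these kill $B_0$ and $T_0$ at once. Rearranging what remains produces exactly
\begin{equation*}
B_n^2(z) + \textsf{v}'(z) B_n(z) + \sum_{j=0}^{n-1} A_j(z) \;=\; \beta_n A_n(z) A_{n-1}(z),
\end{equation*}
which is $(S_2')$.

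There is no genuine technical obstacle: every step is a direct manipulation of identities that are already in hand. The only mildly non-obvious element is recognizing that multiplying $(S_1)$ by the specific combination $B_{j+1}-B_j$ is what generates matching telescopes on both sides; once this is spotted, the vanishing of the boundary terms at $j=0$ via the standard initial conventions makes the derivation essentially automatic.
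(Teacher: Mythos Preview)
Your telescoping argument is correct and is exactly the standard derivation of $(S_2')$ from $(S_1)$ and $(S_2)$. The paper itself does not spell out a proof but simply refers the reader to the proof of Theorem~2.2 in \cite{cd}; the argument there is precisely the one you give. One small cosmetic point: rather than asserting $A_{-1}(z)\equiv 0$ (which formally involves the undefined $h_{-1}$), it is cleaner to note that the boundary term $T_0=\beta_0 A_{-1}(z)A_0(z)$ vanishes by the paper's convention $\beta_0 P_{-1}(x):=0$, which is what $(S_2)$ at $j=0$ already requires.
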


\begin{proof}
See the proof of Theorem 2.2 in \cite{cd}.
\end{proof}
The conditions $S_1$, $S_2$ and $S_2'$ are usually called the
compatibility conditions for the ladder operators, which will play
an important role in our future analysis. Although the author
obtained an equivalent form of $S_2'$ in \cite{magnus}, he did not
further study it. We would also like to emphasize that, as in
\cite{cd}, the condition $S_2$ is essential in the present case (see
Remark \ref{use of S2} below), while in \cite{basor-chen} and
\cite{basorCE}, only the conditions $S_1$ and $S_2'$ are sufficient
to derive all the relations.




\section{The analysis of the ladder operators}\label{The analysis of the ladder operator}

\subsection{Some auxiliary constants}

To prove our results stated in Section \ref{introdutcion}, we would
like to introduce some auxiliary constants first. For the weight
function $w(z)$ given in \eqref{w-def}, we know
\begin{equation}
\textsf{v}(z):= - \ln w(z) = -\alpha \ln z-\beta \ln (1-z)-\gamma
\ln (z-t).
\end{equation}
Hence,
\begin{equation} \label{vz'}
\textsf{v}'(z) =  - \frac{\alpha}{z} - \frac{\beta}{z-1} -
\frac{\gamma}{z-t}
\end{equation}
and
\begin{equation} \label{vz'2}
\frac{\textsf{v}'(z) - \textsf{v}'(y)}{z-y} = \frac{\alpha}{z y} +
\frac{\beta}{(z-1)(y-1)} + \frac{\gamma}{(z-t)(y-t)}.
\end{equation}
Since the right-hand side of the above formula is rational in $z$,
it is easily seen that both $A_n(z)$ and $B_n(z)$ are also rational
in $z$ from their definitions in (\ref{an-def}) and (\ref{bn-def}).
More precisely, we have the following lemma.

\begin{lem}
We have
\begin{align}
A_n(z) & = \frac{R^*_n}{z} - \frac{R_n}{z-1} + \frac{R_n-R^*_n}{z-t}, \label{an-new}\\
B_n(z) & = \frac{r^*_n}{z} - \frac{r_n}{z-1} +
\frac{r_n-r^*_n-n}{z-t}, \label{bn-new}
\end{align}
where
\begin{align}
R^*_n & := \frac{\alpha}{h_n} \int_0^1 [P_n(y)]^2 w(y) \frac{dy}{y}, \label{rn1-def}\\
R_n & := \frac{\beta}{h_n} \int_0^1 [P_n(y)]^2 w(y) \frac{dy}{1-y}, \label{r*n1-def} \\
r^*_n & := \frac{\alpha}{h_{n-1}} \int_0^1 P_{n-1}(y) P_n(y) w(y) \frac{dy}{y}, \label{rn2-def} \\
r_n & := \frac{\beta}{h_{n-1}} \int_0^1 P_{n-1}(y) P_n(y) w(y)
\frac{dy}{1-y}. \label{r*n2-def}
\end{align}
\end{lem}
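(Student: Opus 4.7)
The plan is to substitute the explicit rational expression \eqref{vz'2} for the difference quotient $(\textsf{v}'(z)-\textsf{v}'(y))/(z-y)$ directly into the defining integrals \eqref{an-def} and \eqref{bn-def}. This splits each of $A_n(z)$ and $B_n(z)$ into three pieces, one for each pole of $\textsf{v}'$. The $1/z$ and $1/(z-1)$ pieces immediately reproduce $R^*_n/z$, $-R_n/(z-1)$, $r^*_n/z$, and $-r_n/(z-1)$, where the sign in front of the $\beta$-integrals comes from rewriting $1/(y-1)=-1/(1-y)$ to match the definitions of $R_n$ and $r_n$.

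The substantive task is to identify the $1/(z-t)$ coefficient. To evaluate the integrals
\[
I_A := \frac{\gamma}{h_n}\int_0^1 \frac{[P_n(y)]^2 w(y)}{y-t}\,dy, \qquad I_B := \frac{\gamma}{h_{n-1}}\int_0^1 \frac{P_{n-1}(y)P_n(y)\, w(y)}{y-t}\,dy,
\]
I would use the partial-fraction identity read off directly from \eqref{vz'},
\[
\frac{\gamma}{y-t} = -\textsf{v}'(y) - \frac{\alpha}{y} - \frac{\beta}{y-1}.
\]
Plugging this in splits each integral into three pieces: the $\alpha/y$ piece contributes $-R^*_n$ (resp.\ $-r^*_n$), the $\beta/(y-1)$ piece contributes $+R_n$ (resp.\ $+r_n$) after a sign flip, and there remain the $\textsf{v}'(y)$-integrals to handle.

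For the $\textsf{v}'(y)$-integrals I would integrate by parts using the key relation $\textsf{v}'(y)w(y) = -w'(y)$. Since $\alpha,\beta>0$, the weight $w$ vanishes at both endpoints $y=0$ and $y=1$, so all boundary terms vanish. Orthogonality then takes over. For $I_A$, one gets $\int_0^1 P_n(y)P_n'(y) w(y)\,dy$, which is zero because $P_n'$ has degree $n-1$. For $I_B$, one gets $\int_0^1 \bigl(P_{n-1}'P_n + P_{n-1}P_n'\bigr) w\,dy$; the first term vanishes by orthogonality, and the second equals $n\,h_{n-1}$ after extracting the leading monomial $n y^{n-1}$ of $P_n'$ and applying orthogonality to kill lower-order terms. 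This is precisely the source of the extra $-n$ in \eqref{bn-new}. Assembling the three pieces yields $R_n - R^*_n$ as the coefficient of $1/(z-t)$ in $A_n$, and $r_n - r^*_n - n$ as the coefficient of $1/(z-t)$ in $B_n$.

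The main obstacle is purely bookkeeping: carefully tracking signs in the partial-fraction decomposition, verifying that the boundary terms in the integration by parts do vanish under the assumption $\alpha,\beta>0$, and correctly isolating the leading monomial of $P_n'$ to produce the $n\,h_{n-1}$ term. No deeper machinery beyond the orthogonality relation \eqref{orthogonality} and the explicit form \eqref{vz'} of $\textsf{v}'(z)$ is required.
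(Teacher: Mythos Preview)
Your proposal is correct and follows essentially the same route as the paper: substitute \eqref{vz'2} into \eqref{an-def}--\eqref{bn-def}, read off the $1/z$ and $1/(z-1)$ coefficients directly, and determine the $1/(z-t)$ coefficient by rewriting $\gamma/(y-t)$ via \eqref{vz'} and integrating $\textsf{v}'(y)w(y)=-w'(y)$ by parts, invoking orthogonality to obtain $0$ for $A_n$ and $n h_{n-1}$ for $B_n$. The paper organizes the computation slightly differently (recording the intermediate identities \eqref{int-p} and \eqref{int by parts 2} explicitly), but the logic is identical.
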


\begin{proof}
Inserting (\ref{vz'2}) into (\ref{an-def}) gives us
\begin{equation} \label{an-1}
\begin{split}
A_n(z) = &~ \frac{1}{h_n} \left[ \frac{\alpha}{z} \int_0^1
[P_n(y)]^2 w(y) \frac{dy}{y}  + \frac{\beta}{z-1} \int_0^1
[P_n(y)]^2 w(y)
\frac{dy}{y-1}  \right. \\
& \left. \quad +\frac{\gamma}{z-t} \int_0^1 [P_n(y)]^2 w(y)
\frac{dy}{y-t} \right].
\end{split}
\end{equation}
Applying integration by parts, we obtain
\begin{equation}
\int_0^1 [P_n(y)]^2 w(y) \textsf{v}'(y) dy = - \int_0^1 [P_n(y)]^2 d
w(y) = \int_0^1 2 P_n'(y) P_n(y) w(y) d y = 0.
\end{equation}
On account of (\ref{vz'}) and the above formula, we have
\begin{equation} \label{int-p}
\gamma \int_0^1 [P_n(y)]^2w(y) \frac{dy}{y-t}  =  \beta\int_0^1
[P_n(y)]^2 w(y) \frac{dy}{1-y} - \alpha \int_0^1 [P_n(y)]^2 w(y)
\frac{dy}{y}.
\end{equation}
A combination of (\ref{an-1}) and (\ref{int-p}) yields
(\ref{an-new}).

In a similar manner, we get (\ref{bn-new}) from (\ref{bn-def}). In
that case, we need to make use of the following equality
\begin{equation}\label{int by parts 2}
\begin{aligned}
\gamma\int_0^1 P_{n-1}(y) P_n(y) w(y) \frac{dy}{y-t}  = &-n h_{n-1}
-\alpha \int_0^1 P_{n-1}(y) P_n(y) w(y)
\frac{dy}{y} \\
& +\beta\int_0^1 P_{n-1}(y) P_n(y) w(y) \frac{dy}{1-y}.
\end{aligned}
\end{equation}
\end{proof}

In view of the compatibility conditions ($S_1$), ($S_2$) and
($S_2'$), one can derive the following relations among the four
auxiliary quantities $R_n, R_n^*, r_n, r_n^*$.

\begin{prop} \label{eqns-r&R}

From ($S_1$), we obtain the following equations
\begin{align}
r^*_{n+1} + r^*_n & = \alpha - \alpha_n R^*_n, \label{r-r1} \\
r_{n+1} + r_n  & = (1- \alpha_n)R_n - \beta, \label{r-r2} \\
tR_n^*-(t-1)R_n &  = 2n+1+\alpha+\beta+\gamma, \label{r-r3}
\end{align}
where the constants $R_n$, $R_n^*$, $r_n$ and $r^*_n$ are defined in
(\ref{rn1-def})--(\ref{r*n2-def}), respectively.

\end{prop}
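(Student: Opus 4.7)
The plan is to identify $(S_1)$ as an equality between two rational functions in $z$ with poles only at $z=0,1,t$, and then extract the three identities by comparing their partial-fraction data.

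First I would substitute the partial-fraction expressions for $A_n(z)$ and $B_n(z)$ from \eqref{an-new}--\eqref{bn-new}, together with the formula \eqref{vz'} for $\textsf{v}'(z)$, directly into
\[
B_{n+1}(z)+B_n(z)=(z-\alpha_n)A_n(z)-\textsf{v}'(z).\tag{$S_1$}
\]
Every term on each side is rational with simple poles only at $z=0,1,t$, so the equality is equivalent to matching the residues at those three points (and the behaviour at infinity).

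Next I would read off the residue at $z=0$ on each side. On the left one obtains $r^*_{n+1}+r^*_n$, while on the right $(z-\alpha_n)A_n(z)$ contributes $-\alpha_n R^*_n$ and $-\textsf{v}'(z)$ contributes $\alpha$. This gives \eqref{r-r1}. Matching residues at $z=1$ in exactly the same way yields \eqref{r-r2}: the left contributes $-(r_{n+1}+r_n)$, the piece $(z-\alpha_n)A_n(z)$ contributes $-(1-\alpha_n)R_n$, and $-\textsf{v}'(z)$ contributes $\beta$.

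For the third relation \eqref{r-r3}, matching residues at $z=t$ only produces a consequence of the first two together with \eqref{r-r3}, so instead I would compare the $1/z$ asymptotics at infinity. From \eqref{bn-new} one sees $B_n(z)=-n/z+O(1/z^2)$, hence the left side is $-(2n+1)/z+O(1/z^2)$. For the right side, a short expansion of \eqref{an-new} shows the $1/z$ coefficient of $A_n(z)$ vanishes and its $1/z^2$ coefficient is $(t-1)R_n-tR_n^*$; multiplying by $(z-\alpha_n)$ gives a $1/z$ contribution of $(t-1)R_n-tR_n^*$, while $-\textsf{v}'(z)$ contributes $\alpha+\beta+\gamma$. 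Equating the two $1/z$ coefficients produces \eqref{r-r3}.

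There is no real obstacle here: the whole argument is partial-fraction bookkeeping, and the only point requiring a small amount of care is the expansion of $A_n(z)$ at infinity, where one must notice that the residues in \eqref{an-new} sum to zero so that $A_n(z)=O(1/z^2)$ and the $1/z^2$ coefficient is precisely $(t-1)R_n-tR_n^*$; this is what makes the third identity an independent consequence of $(S_1)$ rather than a combination of the first two.
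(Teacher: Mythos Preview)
Your argument is correct and follows essentially the same route as the paper: substitute \eqref{an-new}, \eqref{bn-new} and \eqref{vz'} into $(S_1)$ and match partial-fraction data to obtain the three identities. The only cosmetic difference is in extracting \eqref{r-r3}: the paper reads off the residue at $z=t$, obtaining
\[
r_{n+1}+r_n-r_{n+1}^*-r_n^*-2n-1=(t-\alpha_n)(R_n-R_n^*)+\gamma,
\]
and then combines this with \eqref{r-r1} and \eqref{r-r2} to eliminate the $r$'s and $\alpha_n$, whereas you read off the $1/z$ coefficient at infinity directly, which already has the $r$'s and $\alpha_n$ absent and yields \eqref{r-r3} in one step. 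Since the $1/z$ coefficient at infinity equals the sum of the three residues, the two computations are equivalent; your version is marginally tidier for \eqref{r-r3}.
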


\begin{proof}
Substituting (\ref{an-new}) and (\ref{bn-new}) into $(S_1)$, we have
\begin{equation}
\begin{aligned}
&B_{n+1}(z) + B_n(z) \\
&= \frac{r^*_{n+1} + r^*_n}{z} - \frac{r_{n+1} + r_n}{z-1}
+\frac{r_{n+1}+r_{n}-r_{n+1}^*-r_n^*-2n-1}{z-t}
\end{aligned}
\end{equation}
and
\begin{align}
&(z-\alpha_n) A_n(z) - \textsf{v}'(z) \nonumber \\
&= (z-\alpha_n) \left[ \frac{R^*_n}{z} - \frac{R_n}{z-1} +
\frac{R_n-R_n^*}{z-t} \right] +
\frac{\alpha}{z} + \frac{\beta}{z-1} + \frac{\gamma}{z-t}  \nonumber \\
&=\frac{\alpha-\alpha_nR_n^*}{z}-\frac{(1- \alpha_n)R_n -
\beta}{z-1}+\frac{(t-\alpha_n)(R_n-R_n^*)+\gamma}{z-t}.
\end{align}
Comparing the coefficients at $O(z^{-1})$, $O((z-1)^{-1})$ and
$O((z-t)^{-1})$ in the above two formulas, we get
\begin{align}
r^*_{n+1} + r^*_n & = \alpha - \alpha_n R^*_n, \\
 r_{n+1} + r_n & = (1-\alpha_n)R_n - \beta,   \\
  r_{n+1}+r_{n}-r_{n+1}^*-r_n^*-2n-1 &=
 (t-\alpha_n)(R_n-R_n^*)+\gamma.
\end{align}
A combination of the above three formulas gives our proposition.
\end{proof}

\begin{prop} \label{prop-beta}
From ($S_2'$), we have the following equations
\begin{equation}
(r^*_n)^2 - \alpha r^*_n  = \beta_n R^*_n R^*_{n-1},\label{r&R}
\end{equation}
\begin{equation}
r_n^2 + \beta r_n  = \beta_n  R_n R_{n-1},\label{r*&R*}
\end{equation}
\begin{equation}
(2n + \bt + \gamma)r_n-(2n + \al +
\gamma)r_n^*+2r_nr_n^*-n(n+\gamma) =\beta_n(R_{n-1}R^*_n  +
R_nR^*_{n-1} ) \label{r&R&R*}
\end{equation}
and
\begin{equation} \label{Rsum}
\begin{aligned}
\sum_{j=0}^{n-1} R_j =&\
(2n+\alpha+\beta+\gamma)(r_n-r_n^*)-n(n+\gamma)
\\
&+\frac{(2n+\alpha+\beta+\gamma)r_n^*+n(n+\beta+\gamma)}{1-t},
\end{aligned}
\end{equation}
where the constants $R_n$, $R_n^*$, $r_n$ and $r^*_n$ are defined in
(\ref{rn1-def})--(\ref{r*n2-def}), respectively.
\end{prop}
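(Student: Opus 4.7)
The plan is to substitute the explicit partial-fraction expressions for $A_n(z)$, $B_n(z)$, and $\textsf{v}'(z)$ from the previous lemma into the identity $(S_2')$. Since all three building blocks are rational in $z$ with poles only at $z=0$, $z=1$, and $z=t$, both sides of $(S_2')$ become rational functions whose principal parts can be matched term by term. Each of the identities (\ref{r&R}), (\ref{r*&R*}), (\ref{r&R&R*}), and (\ref{Rsum}) will then come out of a specific coefficient comparison.

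The first step is to read off the three double-pole coefficients. At $1/z^2$ the left-hand side contributes $(r_n^*)^2 - \alpha r_n^*$ (from $B_n^2$ and $\textsf{v}'B_n$) and the right-hand side contributes $\beta_n R_n^* R_{n-1}^*$, which gives (\ref{r&R}). The $1/(z-1)^2$ comparison proceeds identically and delivers (\ref{r*&R*}). The $1/(z-t)^2$ comparison yields an intermediate identity
\begin{equation*}
(r_n - r_n^* - n)^2 - \gamma(r_n - r_n^* - n) = \beta_n(R_n - R_n^*)(R_{n-1} - R_{n-1}^*).
\end{equation*}
Expanding the product on the right and using (\ref{r&R}) and (\ref{r*&R*}) to replace $\beta_n R_n R_{n-1}$ and $\beta_n R_n^* R_{n-1}^*$ by the quadratic expressions in $r_n$ and $r_n^*$ that they equal, the $r_n^2$ and $(r_n^*)^2$ contributions cancel and the remainder rearranges into precisely (\ref{r&R&R*}).

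For (\ref{Rsum}), the idea is to compare the simple-pole coefficients of $(S_2')$. Since the sum $\sum_{j=0}^{n-1}A_j(z)$ is the only source of a summation, comparison at $z=0$, $z=1$, and $z=t$ yields three linear expressions for $\sum_{j=0}^{n-1} R_j^*$, $\sum_{j=0}^{n-1} R_j$, and $\sum_{j=0}^{n-1}(R_j - R_j^*)$ in terms of $r_n$, $r_n^*$, $\beta_n$, the cross combination $\beta_n(R_n R_{n-1}^* + R_n^* R_{n-1})$, and the partial-fraction constants $1/t$ and $1/(1-t)$. Using (\ref{r&R&R*}) to eliminate the cross combination and relation (\ref{r-r3}) from Proposition \ref{eqns-r&R} to pass between $\sum R_j$ and $\sum R_j^*$, the three simple-pole equations collapse to a single independent relation, which after rearrangement is exactly (\ref{Rsum}).

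The main obstacle is the bookkeeping for the simple-pole comparison: the partial fractions $\frac{1}{z(z-1)}$, $\frac{1}{z(z-t)}$, and $\frac{1}{(z-1)(z-t)}$ introduce denominators involving $t$ and $t-1$, and these $t$-dependent factors must be tracked carefully across the contributions from $B_n^2$, $\textsf{v}'B_n$, and $\beta_n A_n A_{n-1}$ before any cancellation becomes visible. The double-pole comparisons, by contrast, are local and purely algebraic; the real labour lies in consolidating the three simple-pole identities into the single closed form (\ref{Rsum}).
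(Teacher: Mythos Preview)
Your proposal is correct and follows essentially the same route as the paper: substitute the partial-fraction forms of $A_n$, $B_n$, $\textsf{v}'$ into $(S_2')$, read off the three double-pole coefficients to get (\ref{r&R}), (\ref{r*&R*}), and the intermediate relation at $z=t$, then combine the latter with the first two to obtain (\ref{r&R&R*}). The one minor difference is in extracting (\ref{Rsum}): the paper uses only the single simple-pole comparison at $z=1$ (expanding $1/z$ and $1/(z-t)$ about $z=1$), then eliminates the cross term $\beta_n(R_nR_{n-1}^*+R_n^*R_{n-1})$ via (\ref{r&R&R*}) and the remaining $\beta_nR_nR_{n-1}$ via (\ref{r*&R*}), whereas you propose extracting all three simple-pole residues and collapsing them with (\ref{r-r3}); both lead to the same result, but the paper's version is slightly more economical.
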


\begin{proof}

Again we substitute (\ref{an-new}) and (\ref{bn-new}) into $(S_2')$
to obtain
\begin{equation} \label{s2'lhs}
\begin{split}
B_n^2(z) + \textsf{v}'(z) B_n(z) + \sum_{j=0}^{n-1}A_j(z) &  \\
& \hspace{-130pt} = \frac{(r_n^*)^2-\alpha r_n^*}{z^2} + \frac{r_n^2
+ \beta r_n}{(z-1)^2}
+ \frac{(r_n-r_n^*-n)(r_n-r_n^*-n-\gamma)}{(z-t)^2}  \\
& \hspace{-130pt} \quad + \frac{\alpha r_n-\beta
r_n^*-2r_nr_n^*}{z(z-1)}+ \frac{(r_n-r_n^*-n)(2r_n^*-\alpha)-\gamma
r_n^*}{z(z-t)} \\
&  \hspace{-130pt} \quad - \frac{( r_n-r_n^*-n) (2r_n+\beta) -
\gamma r_n}{ (z-1)(z-t)}  + \sum_{j=0}^{n-1} \left[ \frac{R^*_j}{z}
- \frac{R_j}{z-1} + \frac{R_j-R_j^*}{z-t} \right]
\end{split}
\end{equation}
and
\begin{align}
&\beta_{n} A_n(z) A_{n-1}(z) \nonumber \\
& =  \frac{\beta_n R^*_n R^*_{n-1} }{z^2} +  \frac{\beta_n R_n
R_{n-1}}{(z-1)^2}+ \frac{\beta_n(R_n-R_n^*)
(R_{n-1}-R_{n-1}^*)}{(z-t)^2} \nonumber \\
& \quad -\frac{\beta_n(R_n^*R_{n-1}+R_nR_{n-1}^*)}{z(z-1)}+
\frac{\beta_n(R_n^*R_{n-1}-2R_n^*R_{n-1}^*+R_nR_{n-1}^*)}{z(z-t)}
\nonumber \\
&\quad
+\frac{\beta_n(R_nR_{n-1}^*-2R_nR_{n-1}+R_{n-1}R_n^*)}{(z-1)(z-t)}.
\label{s2'rhs}
\end{align}
Equating the coefficients of the above two formulas at $O(z^{-2})$,
$O((z-1)^{-2})$ and $O((z-t)^{-2})$, we get (\ref{r&R}),
(\ref{r*&R*}) and
\begin{equation}
(r_n-r_n^*-n)(r_n-r_n^*-n-\gamma)=\beta_n(R_n-R_n^*)
(R_{n-1}-R_{n-1}^*),
\end{equation}
respectively. A substitution of (\ref{r&R}), (\ref{r*&R*}) into the
above formula gives us (\ref{r&R&R*}).  At $O((z-1)^{-1})$, note
that
\begin{equation*}
\frac{1}{z} = 1+O(z-1), \qquad \frac{1}{z-t}=\frac{1}{1-t}+O(z-1),
\qquad \textrm{as } z \to 1.
\end{equation*}
It then follows from \eqref{s2'lhs} and \eqref{s2'rhs} that
\begin{equation}
\begin{aligned}
&-2r_nr_n^*-\beta r_n^*+\alpha r_n-\sum_{j=0}^{n-1}R_j -
\frac{(r_n-r_n^*-n)(2r_n+\beta)-\gamma r_n}{1-t} \\
&= -\beta_n \left[ R_n R_{n-1}^*+R_{n-1}R_n^*  +
\frac{-R_nR_{n-1}^*+2R_nR_{n-1}-R_{n-1}R_n^*}{1-t}\right].
\end{aligned}
\end{equation}
Combining (\ref{r&R&R*}) and the above formula, we have
\begin{equation} \label{Rsum0}
\begin{split}
\sum_{j=0}^{n-1} R_j = & \ (2n+\alpha+\beta+\gamma)(r_n-r_n^*)-n(n+\gamma) \\
&
+\frac{2\beta_nR_nR_{n-1}+(2n+\alpha+\beta+\gamma)r_n^*+n(n+\beta+\gamma)-2r_n^2-2\beta
r_n}{1-t}.
\end{split}
\end{equation}
Eliminating $\beta_nR_nR_{n-1}$ in (\ref{Rsum0}) with the aid of
(\ref{r*&R*}), we finally obtain (\ref{Rsum}).
\end{proof}

\begin{rem}\label{use of S2}
From another condition ($S_2$), we get one more equation as follows
\begin{equation} \label{r&r4}
(t-1)(r_{n+1} - r_n) -t (r_{n+1}^* - r_n^*) - t + \alpha_n = 0.
\end{equation}
Rewriting the above formula yields
\begin{equation} \label{r-alphan}
\al_n= t(r_{n+1}^*-r_n^*) -(t-1)(r_{n+1}-r_n) + t.
\end{equation}
Since it follows from \eqref{pn-formula} and \eqref{recurrence} that
\begin{equation} \label{alpha-p1n}
\alpha_n = \textsf{p}_1(n) - \textsf{p}_1(n+1), \qquad n = 0 , 1, 2,
\cdots
\end{equation}
with $\textsf{p}_1(0):=0$, hence, it is easily seen
\begin{equation} \label{alphasum-p1n}
-\sum_{j=0}^{n-1}\al_j=\textsf{p}_1(n).
\end{equation}
Inserting \eqref{r-alphan} into the above formula, we obtain
\begin{equation} \label{p1n-rs}
\textsf{p}_1(n)=(t-1)r_n-tr_n^*-nt,
\end{equation}
where we have made use of the initial conditions
$r_0(t)=r_0^*(t):=0$.
\end{rem}


\subsection{The recurrence coefficients}

Not only the coefficients $A_n(z)$ and $B_n(z)$ of the ladder
operators in \eqref{ladder1} and \eqref{ladder2}, but also the
recurrence coefficients $\al_n$ and $\bt_n$ in \eqref{recurrence}
can be written in terms of the auxiliary quantities $R_n,$ $r_n$ and
$r_n^*$. Here we do not need $R_n^*$ since it is related to $R_n$ in
a simple way; see (\ref{r-r3}).

\begin{lem} \label{lem-ab}
The recurrence coefficients $\al_n$ and $\bt_n$ are expressed in
terms of $R_n,\:r_n$ and $r_n^*$ as follows:
\begin{equation} \label{alpha-rs}
(2n+2+\al+\bt+\gamma)\al_n=2(t-1)r_n-2tr_n^*+(1-t)R_n+(\alpha+\beta+1)t-\beta
\end{equation}
and
\begin{equation} \label{beta-r-r*}
\begin{aligned}
&(2n-1+\al+\bt+\gamma)(2n+1+\al+\bt+\gamma)\bt_n\\
&=[tr_n^*-(t-1)r_n]^2 -(t-1)(2nt+\gamma t+
\bt)r_n+t[(t-1)(2n+\gamma)-\alpha]r_n^*\\
& \ \ \ +n(n+\gamma)(t^2-t).
\end{aligned}
\end{equation}
\end{lem}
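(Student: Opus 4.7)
The plan is to derive the two formulas separately, in both cases by elementary algebra on the compatibility identities already assembled in Propositions 3.1 and 3.2 together with the $S_2$ consequence \eqref{r&r4}.

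\textbf{Formula for $\alpha_n$.} First I would use \eqref{r-r1} and \eqref{r-r2} to express $r_{n+1}^* - r_n^*$ and $r_{n+1} - r_n$ as linear combinations of $R_n^*$, $R_n$, $r_n^*$, $r_n$, and $\alpha_n$. Substituting these into \eqref{r&r4} and collecting $\alpha_n$ on the left produces a relation whose coefficient of $\alpha_n$ is $1 + tR_n^* - (t-1)R_n$, which by \eqref{r-r3} simplifies at once to $2n+2+\alpha+\beta+\gamma$. Gathering the constant and linear-in-$(r_n,r_n^*,R_n)$ terms on the right then delivers \eqref{alpha-rs}.

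\textbf{Formula for $\beta_n$.} The strategy is to combine the three quadratic identities \eqref{r&R}, \eqref{r*&R*}, \eqref{r&R&R*} into a single relation from which all occurrences of $R_n$, $R_n^*$, $R_{n-1}$, $R_{n-1}^*$ disappear. The mechanism is \eqref{r-r3}: writing $c_n := 2n+1+\alpha+\beta+\gamma$, its product at consecutive indices factorises as
\begin{equation*}
c_n c_{n-1} = t^2 R_n^* R_{n-1}^* - t(t-1)\bigl(R_n^* R_{n-1} + R_n R_{n-1}^*\bigr) + (t-1)^2 R_n R_{n-1}.
\end{equation*}
Multiplying by $\beta_n$, the three bilinear combinations $\beta_n R_n^* R_{n-1}^*$, $\beta_n(R_n R_{n-1}^* + R_{n-1} R_n^*)$, and $\beta_n R_n R_{n-1}$ appearing on the right are precisely the quantities supplied by \eqref{r&R}, \eqref{r&R&R*}, and \eqref{r*&R*}. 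After substitution, the purely quadratic part in $r_n, r_n^*$ assembles into the perfect square $[tr_n^* - (t-1)r_n]^2$, leaving a remainder that is linear in $r_n, r_n^*$ plus a $t$-dependent constant; grouping the coefficients of $r_n$ and $r_n^*$ reproduces \eqref{beta-r-r*}.

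\textbf{Main obstacle.} The only non-routine step is spotting the factorisation $c_n c_{n-1} = [tR_n^* - (t-1)R_n][tR_{n-1}^* - (t-1)R_{n-1}]$. Without it there is no obvious way to eliminate all four quantities $R_n, R_n^*, R_{n-1}, R_{n-1}^*$ from the three $S_2'$ relations simultaneously, and this is also what accounts for the prefactor $c_{n-1} c_n$ on the left of \eqref{beta-r-r*}. Once this bilinear identity is in hand, the remaining manipulation is purely mechanical.
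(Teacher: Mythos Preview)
Your proposal is correct and follows essentially the same route as the paper: the $\alpha_n$ derivation is identical, and for $\beta_n$ the paper also combines \eqref{r&R}, \eqref{r*&R*}, \eqref{r&R&R*} via \eqref{r-r3} to eliminate the $R$'s. The only cosmetic difference is that the paper organizes the $\beta_n$ elimination in two stages---first using \eqref{r-r3} to rewrite $t^2\beta_n R_n^*R_{n-1}^*$ and $t\beta_n(R_nR_{n-1}^*+R_{n-1}R_n^*)$ separately in terms of $\beta_n R_nR_{n-1}$, $\beta_n R_n$, $\beta_n R_{n-1}$, then substituting one resulting identity into the other---whereas your single bilinear expansion of $c_nc_{n-1}=[tR_n^*-(t-1)R_n][tR_{n-1}^*-(t-1)R_{n-1}]$ reaches the same endpoint in one step.
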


\begin{proof}

We use \eqref{r-r1} and \eqref{r-r2} to eliminate $r_{n+1}^*$ and
$r_{n+1}$ in \eqref{r&r4} and get
\begin{equation}
[1 + t R_n^* - (t-1) R_n ] \al_n = t (\al - 2 r_n^*)  - (t-1)(R_n -
\bt - 2 r_n) + t.
\end{equation}
Substituting \eqref{r-r3} into the above formula immediately gives
us \eqref{alpha-rs}.

To derive the formula for $\bt_n$, we need to consider the
identities in Proposition \ref{prop-beta}. Multiplying both sides of
\eqref{r&R} by $t^2$ and eliminating $t^2R_n^*R_{n-1}^*$ with the
aid of \eqref{r-r3}, we have
\begin{equation}\label{btn-1}
t^2((r_n^*)^2-\alpha
r_n^*)=(t-1)\beta_n[(t-1)R_{n-1}R_n+c_{n-1}R_n+c_{n}R_{n-1}]+c_{n-1}c_{n}\beta_n,
\end{equation}
where $c_n:=2n+1+\alpha+\bt+\gamma$. Similarly, we multiply both
sides of \eqref{r&R&R*} by $t$ and us \eqref{r-r3} again to get
\begin{equation*}
t[(2n+\beta+\gamma)r_n-(2n+\alpha+\gamma)r_n^*+2r_nr_n^*-n(n+\gamma)]
= \beta_n[2(t-1)R_{n-1}R_n+c_{n-1}R_n+c_nR_{n-1}].
\end{equation*}
On account of \eqref{r*&R*}, it is readily derived from the above
formula that
\begin{equation}\label{btn-2}
\begin{aligned}
&t[(2n+\beta+\gamma)r_n-(2n+\alpha+\gamma)r_n^*+2r_nr_n^*-n(n+\gamma)] -(t-1)(r_n^2+\bt r_n) \\
 &= \beta_n[(t-1)R_{n-1}R_n+c_{n-1}R_n+c_{n}R_{n-1}].
\end{aligned}
\end{equation}
Substituting \eqref{btn-2} into \eqref{btn-1} yields
(\ref{beta-r-r*}).
\end{proof}

\begin{rem}
For $n=0,$ from \eqref{recurrence} and the definitions of $R_0(t)$,
$r_0(t)$ and $r_0^*(t)$, it follows that
\begin{align*}
\al_0(t) & = \frac{(\al + 1) \  {}_2F_1( \al + 2, -\gamma, \al + \bt
+ 3; \frac{1}{t})}{(\al + \bt
+ 2) \  {}_2F_1( \al + 1, -\gamma, \al + \bt + 2; \frac{1}{t})} , \\
R_0(t)& = \frac{(\al + \bt + 1) \  {}_2F_1( \al + 1, -\gamma, \al +
\bt + 1; \frac{1}{t})}{{}_2F_1( \al +1, -\gamma, \al + \bt + 2;
\frac{1}{t})}, \\
r_0(t) & = r_0^*(t) = 0,
\end{align*}
where ${}_2F_1$ is the hypergeometric function; see
\cite[p.556]{as}. The validity of (\ref{alpha-rs}) at $n=0$ can be
verified directly from the above formulas.

Furthermore, it is easily seen that
\begin{equation} \label{R0-asy}
R_0(t)= \al + \bt + 1 + O(1/t),
\end{equation}
as $t\to -\infty$.
\end{rem}


\subsection{The $t$ dependance}

Recall that our weight function depends on $t$, therefore, all of
the quantities considered in this paper such as the coefficient of
generalized Jacobi polynomials, Hankel determinant, etc., can be
viewed as functions in $t$. In this subsection, we will investigate
their dependance with respect to this parameter. We start with the
study the coefficient $\textsf{p}_1(n)$ in \eqref{pn-formula}.

\begin{lem}

We have
\begin{equation} \label{p1n-diff}
\frac{d}{dt} \textsf{p}_1(n) =  r_n-r_n^*-n.
\end{equation}

\end{lem}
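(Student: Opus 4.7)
The plan is to differentiate the orthogonality of $P_n$ against lower-degree polynomials with respect to $t$, then identify $\frac{d}{dt}\textsf{p}_1(n)$ with the coefficient of $P_{n-1}$ in the expansion of $\partial_t P_n$.

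Since the leading coefficient of $P_n(x)$ is $1$ and independent of $t$, the partial derivative $\partial_t P_n(x)$ is a polynomial in $x$ of degree at most $n-1$. Expanding it in the orthogonal basis,
\begin{equation*}
\frac{\partial P_n}{\partial t}(x) \;=\; \sum_{j=0}^{n-1} c_{n,j}(t)\, P_j(x), \qquad c_{n,j}(t)=\frac{1}{h_j}\int_0^1 \frac{\partial P_n}{\partial t}(x)\,P_j(x)\,w(x;t)\,dx,
\end{equation*}
the coefficient of $x^{n-1}$ on the left is $\frac{d}{dt}\textsf{p}_1(n)$, while on the right the only contribution comes from $P_{n-1}(x)=x^{n-1}+\cdots$. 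Hence $\frac{d}{dt}\textsf{p}_1(n)=c_{n,n-1}(t)$, and the task reduces to evaluating $c_{n,n-1}$.

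Next, I would start from $\int_0^1 P_n(x)P_{n-1}(x)w(x;t)\,dx=0$ and differentiate both sides in $t$. Since $\partial_t P_{n-1}$ has degree $\le n-2$ and is therefore $w$-orthogonal to $P_n$, that cross term drops, leaving
\begin{equation*}
\int_0^1 \frac{\partial P_n}{\partial t}(x)\,P_{n-1}(x)\,w(x;t)\,dx \;=\; -\int_0^1 P_n(x)\,P_{n-1}(x)\,\frac{\partial w}{\partial t}(x;t)\,dx.
\end{equation*}
A direct computation from \eqref{w-def} gives $\partial_t w(x;t)=-\gamma\,w(x;t)/(x-t)$, so the right-hand side becomes $\gamma\int_0^1 P_n P_{n-1}\,w(y)\,dy/(y-t)$.

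Finally, I would invoke the integration-by-parts identity \eqref{int by parts 2}, which states
\begin{equation*}
\gamma\int_0^1 P_{n-1}P_n\,w\,\frac{dy}{y-t} \;=\; -n h_{n-1} - \alpha\int_0^1 P_{n-1}P_n\,w\,\frac{dy}{y} + \beta\int_0^1 P_{n-1}P_n\,w\,\frac{dy}{1-y}.
\end{equation*}
Dividing by $h_{n-1}$ and recognizing $r_n^*$ and $r_n$ from \eqref{rn2-def}--\eqref{r*n2-def} yields $c_{n,n-1}=r_n-r_n^*-n$, which is exactly \eqref{p1n-diff}. No single step is a real obstacle; the only thing to watch is the bookkeeping of the sign from $\partial_t(x-t)^\gamma$ and the cancellation using the previously established identity \eqref{int by parts 2}.
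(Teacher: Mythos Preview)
Your proposal is correct and follows essentially the same approach as the paper: differentiate the orthogonality relation $\int_0^1 P_n P_{n-1} w\,dx=0$ in $t$, use $\partial_t w=-\gamma w/(x-t)$, and then invoke \eqref{int by parts 2} together with the definitions \eqref{rn2-def}--\eqref{r*n2-def}. The only cosmetic difference is that you make explicit the vanishing of the cross term $\int_0^1 P_n\,\partial_t P_{n-1}\,w\,dx$ and the identification of $c_{n,n-1}$ with $\frac{d}{dt}\textsf{p}_1(n)$, points the paper leaves implicit.
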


\begin{proof}

From the orthogonal property (\ref{orthogonality}), we know
\begin{equation*}
\int_0^1 P_n(x) P_{n-1}(x) w(x;t) dx = 0.
\end{equation*}
Note that $P_n(x)$ is also dependent on $t$. Taking derivative of
the above formula with respect to $t$ gives us
\begin{equation*}
\int_0^1 \frac{d}{dt} P_n(x) \; P_{n-1}(x) w(x;t) dx + \int_0^1
P_n(x) P_{n-1}(x) \frac{d}{dt}w(x;t) dx = 0.
\end{equation*}
It then follows from (\ref{orthogonality})--(\ref{w-def}) that
\begin{equation*}
h_{n-1} \frac{d}{dt} \textsf{p}_1(n) - \gamma\int_0^1 P_n(x)
P_{n-1}(x) w(x)  \frac{dx}{x-t} = 0.
\end{equation*}
Combining (\ref{rn2-def}), \eqref{r*n2-def} and \eqref{int by parts
2}, we get (\ref{p1n-diff}) immediately.
\end{proof}

By (\ref{p1n-rs}) and the above lemma, it is easily seen that
\begin{equation} \label{p1n-diff-2}
\frac{d}{dt}\textsf{p}_1(n)= r_n-r_n^*-n =r_n  +(t-1)\frac{d}{dt}r_n
-r_n^* -t\frac{d}{dt}r_n^* -n.
\end{equation}
Hence, we obtain the following nice relation between the derivatives
of $r_n$ and $r_n^*$
\begin{equation} \label{r&r-diff}
t\frac{d}{dt}r_n^*= (t-1)\frac{d}{dt}r_n.
\end{equation}

Next, we derive the following property about the Hankel determinant
$D_n(t)$.

\begin{lem} \label{lem-hankel}

We have
\begin{equation} \label{dn-R*}
t \frac{d}{dt} \ln D_n(t) = n(n+\alpha+\bt+\gamma)- \sum_{j=0}^{n-1}
R_j,
\end{equation}
where $R_j$ is defined in (\ref{r*n1-def}).

\end{lem}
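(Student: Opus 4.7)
The plan is to differentiate $D_n = \prod_{j=0}^{n-1} h_j$ term by term, then use the integration-by-parts identity \eqref{int-p} together with the $S_1$-identity \eqref{r-r3} to express everything in terms of the single sum $\sum_{j=0}^{n-1} R_j$.

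First I would write $\frac{d}{dt}\ln D_n = \sum_{j=0}^{n-1} \frac{1}{h_j} \frac{d h_j}{dt}$ and differentiate $h_j = \int_0^1 [P_j(x)]^2 w(x;t) \, dx$ under the integral sign. Since $P_j$ is monic, $\frac{d}{dt} P_j(x)$ is a polynomial in $x$ of degree at most $j-1$, so the contribution $2\int_0^1 P_j \, \frac{dP_j}{dt} \, w \, dx$ vanishes by the orthogonality \eqref{orthogonality}. From the explicit form \eqref{w-def}, $\frac{d}{dt} w(x;t) = -\gamma \, w(x;t)/(x-t)$, which gives
\begin{equation*}
\frac{1}{h_j} \frac{dh_j}{dt} = -\frac{\gamma}{h_j} \int_0^1 [P_j(y)]^2 \, \frac{w(y)}{y-t} \, dy.
\end{equation*}

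Next I would invoke the identity \eqref{int-p} (derived from integration by parts against $\textsf{v}'$) to rewrite the right-hand side: dividing both sides of \eqref{int-p} by $h_j$ and recalling the definitions \eqref{rn1-def} and \eqref{r*n1-def} of $R_j^*$ and $R_j$, we find
\begin{equation*}
\frac{\gamma}{h_j}\int_0^1 [P_j(y)]^2 w(y) \, \frac{dy}{y-t} \;=\; R_j - R_j^*,
\end{equation*}
so that $\frac{1}{h_j}\frac{dh_j}{dt} = R_j^* - R_j$. Multiplying by $t$ and summing over $j$ gives $t \frac{d}{dt}\ln D_n = \sum_{j=0}^{n-1}\bigl(tR_j^* - tR_j\bigr)$.

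Finally I would eliminate $R_j^*$ using \eqref{r-r3}, which states $tR_j^* - (t-1)R_j = 2j+1+\alpha+\beta+\gamma$. This yields $tR_j^* - tR_j = 2j+1+\alpha+\beta+\gamma - R_j$, so
\begin{equation*}
t \frac{d}{dt}\ln D_n(t) \;=\; \sum_{j=0}^{n-1}\bigl(2j+1+\alpha+\beta+\gamma\bigr) - \sum_{j=0}^{n-1} R_j \;=\; n(n+\alpha+\beta+\gamma) - \sum_{j=0}^{n-1} R_j,
\end{equation*}
using $\sum_{j=0}^{n-1}(2j+1) = n^2$. This is exactly \eqref{dn-R*}.

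There is no real obstacle here; the whole argument is a direct chain of substitutions. The only point requiring care is that one must combine \emph{two} inputs from the ladder-operator machinery---the integration-by-parts identity \eqref{int-p} (used to bring the $(x-t)^{-1}$ moment inside $[0,1]$ into the $R_j, R_j^*$ framework) and the $S_1$-relation \eqref{r-r3} (used to convert the $R_j^*$ contribution, after multiplying by $t$, into a $R_j$-contribution plus an explicit linear term in $j$).
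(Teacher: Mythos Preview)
Your proof is correct and follows essentially the same route as the paper's own argument: differentiate $h_j$ with respect to $t$, use \eqref{int-p} to obtain $h_j'/h_j = R_j^* - R_j$, then apply \eqref{r-r3} to eliminate $R_j^*$ and sum over $j$ via $D_n=\prod h_j$. The only cosmetic difference is that the paper first records the single-index identity $t\frac{d}{dt}\ln h_n = 2n+1+\alpha+\beta+\gamma - R_n$ (equation \eqref{tlnh'}) before summing, whereas you sum first and simplify afterward; your explicit remark that the $\frac{dP_j}{dt}$ contribution vanishes by orthogonality is a point the paper leaves implicit.
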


\begin{proof}

Differentiating (\ref{orthogonality}) with respect to $t$ yields
\begin{equation}
h_n' = - \gamma\int_0^1 [P_n(x)]^2 w(x) \frac{dx}{x-t}.
\end{equation}
This, together with (\ref{rn1-def}), \eqref{r*n1-def} and
\eqref{int-p} implies
\begin{equation} \label{h'}
h_n' = h_n(R_n^*-R_n).
\end{equation}
Using \eqref{r-r3} to replace $R_n^*$ by $R_n$, we find
\begin{equation}\label{tlnh'}
t \frac{d}{dt} \ln h_n = 2n+1+\al+\bt+\gamma - R_n.
\end{equation}
Then, a combination of \eqref{tlnh'} and (\ref{dn-def}) gives us
\eqref{dn-R*}.
\end{proof}

Finally, we derive differential equations for the recurrence
coefficients $\alpha_n$ and $\beta_n$. They are the non-standard
Toda equations.
\begin{lem} \label{lem-toda}
The recurrence coefficients $\alpha_n$ and $\beta_n$ satisfy the
following differential equations
$$
t\frac{d}{dt} \,\alpha_n  =  \al_n + r_n - r_{n+1}, \eqno(T_1) \\
$$
$$
t\frac{d}{dt} \, \beta_n  = (2+R_{n-1}-R_n) \, \beta_n, \eqno(T_2).
$$
where $R_n$ and $r_n$ are defined in (\ref{r*n1-def}) and
(\ref{r*n2-def}), respectively.
\end{lem}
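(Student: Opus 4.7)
The proof plan for Lemma \ref{lem-toda} proceeds independently for the two equations, and in both cases amounts to assembling formulas already established in the excerpt.

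For $(T_1)$, I would start from the identity $\al_n = \textsf{p}_1(n) - \textsf{p}_1(n+1)$ (see \eqref{alpha-p1n}) and differentiate in $t$. Applying \eqref{p1n-diff} to each term gives
\begin{equation*}
\frac{d}{dt}\al_n = (r_n - r_n^* - n) - (r_{n+1} - r_{n+1}^* - n - 1) = 1 + (r_n - r_{n+1}) + (r_{n+1}^* - r_n^*).
\end{equation*}
Multiplying by $t$ and invoking \eqref{r-alphan} (derived from $S_2$) in the form $t(r_{n+1}^* - r_n^*) = \al_n - t + (t-1)(r_{n+1} - r_n)$, the $t$-proportional terms collapse and the $r^*$-difference is eliminated, leaving exactly $\al_n + r_n - r_{n+1}$. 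No further input is required.

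For $(T_2)$, I would use the well-known identity $\bt_n = h_n/h_{n-1}$, which follows directly from the three-term recurrence \eqref{recurrence} and orthogonality. Taking logarithmic derivatives gives
\begin{equation*}
t \frac{d}{dt} \ln \bt_n = t \frac{d}{dt} \ln h_n - t \frac{d}{dt} \ln h_{n-1},
\end{equation*}
and then \eqref{tlnh'} evaluated at indices $n$ and $n-1$ yields $t\frac{d}{dt}\ln\bt_n = 2 + R_{n-1} - R_n$. Multiplying through by $\bt_n$ finishes this equation.

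Neither computation presents any genuine obstacle; the two assertions are direct consequences of \eqref{p1n-diff}, \eqref{r-alphan}, \eqref{alpha-p1n}, and \eqref{tlnh'}. The only step that requires care is the sign bookkeeping in the $(T_1)$ derivation when substituting \eqref{r-alphan}, since the combination $t(r_n - r_{n+1}) + (t-1)(r_{n+1} - r_n)$ must simplify cleanly to $r_n - r_{n+1}$; this is purely algebraic and should be laid out explicitly so the reader can check the cancellation at a glance.
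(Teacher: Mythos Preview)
Your proposal is correct and follows essentially the same route as the paper: for $(T_1)$ the paper also differentiates \eqref{alpha-p1n} via \eqref{p1n-diff} and then invokes \eqref{r&r4} (equivalently \eqref{r-alphan}) to eliminate the $r^*$-difference, and for $(T_2)$ it likewise combines $\beta_n=h_n/h_{n-1}$ with \eqref{tlnh'}. The only cosmetic difference is that the paper applies $t\frac{d}{dt}$ directly rather than first differentiating and then multiplying by $t$, but the substance is identical.
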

\begin{proof}
Applying $t\frac{d}{dt}$ to both sides of (\ref{alpha-p1n}), we have
from (\ref{p1n-diff})
\begin{equation*}
t\frac{d}{dt} \,\alpha_n  = t (r_{n} - r_{n}^* - n ) - t (r_{n+1} -
r_{n+1}^* - n - 1).
\end{equation*}
$(T_1)$ then follows from \eqref{r&r4} and the above formula. Using
\eqref{tlnh'}, it is readily seen that
\begin{equation*}
t\frac{d}{dt} \ln \frac{h_n}{h_{n-1}} =2+R_{n-1}-R_n.
\end{equation*}
Note that $\beta_n = h_n / h_{n-1}$, one easily gets $(T_2)$ from
the above formula.
\end{proof}


\section{Proof of Theorem \ref{thm-hn}}\label{Proof of Theorem 1}

Now we are ready to prove our main theorem. The idea is to make use
of Lemma \ref{lem-hankel} to express $r_n^*$ and $r_n$ in terms of
$H_n$ and its derivative with respect to $t$. Then we derive two
independent formulas for $R_n$ in terms of $r_n^*$ and $r_n$ with
the aid of \eqref{beta-r-r*} and ($T_2$). Equating these two
formulas, finally we obtain an equation involving $H_n,$ $H_n'$ and
$H_n''$. We first need the following proposition for $r_n$ and
$r_n^*$.
\begin{prop}
\begin{align}
r_n^*&=-\frac{n(n+\bt+\gamma)+ (t-1) H_n' - H_n}{2n + \al+ \bt + \gamma}, \label{r*-hn} \\
r_n&=\frac{n(n+\al+\gamma)-t H_n'+ H_n}{2n + \al + \bt+ \gamma}.
\label{r-hn}
\end{align}
\begin{proof}
Recalling the definition of $H_n$ in \eqref{hn-def}, we substitute
(\ref{Rsum}) into (\ref{dn-R*}) and get
\begin{equation} \label{hn-r-r*}
\begin{aligned}
H_n&=\biggl[ n(2n+\al+\bt+2\gamma)-(2n+\al+\bt+\gamma)(r_n-r_n^*) \biggr](t-1)\\
&\ \ \ + (2n+\al+\bt+\gamma)r_n^* +n(n+\bt+\gamma),
\end{aligned}
\end{equation}
Taking a derivative of the above formula with respect to $t$, it
then follows from (\ref{r&r-diff}) that
\begin{equation} \label{hn-r*}
(t-1) H_n' - H_n =-n(n+\bt+\gamma)-(2n + \al+\bt+\gamma)r_n^*,
\end{equation}
which gives us (\ref{r*-hn}). Eliminating $r_n^*$ from the above two
formulas, we get (\ref{r-hn}).
\end{proof}
\end{prop}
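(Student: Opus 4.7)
The plan is to invert the already-established link between the Hankel determinant and the auxiliary constants, thereby expressing both $r_n$ and $r_n^*$ in terms of $H_n$ and $H_n'$. My starting ingredient is the expression for $t \frac{d}{dt}\ln D_n(t)$ in Lemma \ref{lem-hankel}, combined with the explicit formula \eqref{Rsum} for $\sum_{j=0}^{n-1} R_j$ in terms of $r_n$ and $r_n^*$. Multiplying through by $(t-1)$ and recalling the definition \eqref{hn-def} of $H_n$, one obtains a single linear relation between $H_n$ and the pair $(r_n, r_n^*)$, of the schematic form
\begin{equation*}
H_n = (t-1)\,n(2n+\al+\bt+2\gamma) + (t-1)\,c\,(r_n^* - r_n) + c\,r_n^* + n(n+\bt+\gamma),
\end{equation*}
where I abbreviate $c := 2n + \al + \bt + \gamma$.

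One linear relation in two unknowns is not enough, so the second step is to differentiate with respect to $t$. The crucial observation is the identity \eqref{r&r-diff}, which reads $t\,r_n^{*\prime} = (t-1) r_n'$. When the product rule is applied to the displayed equation, the derivative contributions assemble precisely into $c\bigl[t\,r_n^{*\prime} - (t-1)r_n'\bigr]$, which vanishes identically by \eqref{r&r-diff}. What survives is a purely algebraic identity
\begin{equation*}
H_n' = n(2n+\al+\bt+2\gamma) + c\,(r_n^* - r_n),
\end{equation*}
containing no derivatives of the auxiliary quantities at all.

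At this point I have two linear equations in the two unknowns $(r_n, r_n^*)$, and the remaining work is elementary bookkeeping. Forming the combination $(t-1)H_n' - H_n$ cancels the $(t-1)$-bracket in the first equation and isolates $r_n^*$, immediately yielding \eqref{r*-hn}. Substituting the resulting expression for $r_n^*$ back into the algebraic identity for $H_n'$ solves for $r_n$ and produces \eqref{r-hn}. I foresee no serious obstacle: the only delicate moment is verifying the cancellation of the derivative terms, which rests squarely on \eqref{r&r-diff}, and everything else is routine linear algebra in $(r_n, r_n^*)$.
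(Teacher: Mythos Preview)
Your proposal is correct and follows essentially the same route as the paper's own proof: both combine \eqref{Rsum} with \eqref{dn-R*} to express $H_n$ linearly in $(r_n,r_n^*)$, differentiate and invoke \eqref{r&r-diff} to kill the derivative terms, and then solve the resulting $2\times 2$ linear system. The only cosmetic difference is that you display the intermediate identity $H_n' = n(2n+\al+\bt+2\gamma) + c(r_n^*-r_n)$ before forming $(t-1)H_n'-H_n$, whereas the paper passes directly to the latter combination.
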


Next we derive a proposition for $R_n$ as follows.

\begin{prop}
The auxiliary quantity $R_n$ has the following representations
\begin{align}
R_n(t)= &\ \frac{(2n+1+\al+\bt+\gamma)[l(r_n,
r_n^*,t)-t(1-t)r_n'(t)]} {2 k(r_n, r_n^*,t)}, \label{R-r-r*3} \\
\frac{1}{R_n(t)}=& \ \frac{l(r_n, r_n^*,t)+t(1-t)r_n'(t)} {2 (2n+ 1
+\al+\bt+\gamma)(\bt+r_n) r_n} . \label{R-r-r*4}
\end{align}
where
\begin{equation} \label{l-r-r*}
l(r_n, r_n^*,t):=2(1-t)r_n^2+ [(2n-\bt+\gamma)t+2\bt+2tr_n^*]r_n
-(2n+\al+\gamma)tr_n^*-n(n+\gamma)t
\end{equation}
and
\begin{align}\label{k-r-r*}
k(r_n, r_n^*, t):=& \ [tr_n^*-(t-1)r_n]^2 - (t-1)(2nt+\gamma t+
\bt)r_n
\nonumber \\
& + t[(t-1)(2n+\gamma)-\alpha]r_n^*+n(n+\gamma)(t^2-t).
\end{align}
\end{prop}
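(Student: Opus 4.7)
The strategy is to derive two quadratic equations satisfied by $R_n$---one purely algebraic in $r_n, r_n^*, t$, and one involving the derivative $r_n'(t)$ through the Toda equation $(T_2)$---and then to solve them simultaneously. Throughout I set $c_n := 2n+1+\al+\bt+\gamma$ and $\hat c := 2n+\al+\bt+\gamma$, so that $\beta_n = k/(c_{n-1}c_n)$ by \eqref{beta-r-r*}, $c_n - 1 = \hat c$, and $c_n + c_{n-1} = 2\hat c$. For the algebraic quadratic, I use \eqref{r-r3} to eliminate $R_n^*$ and $R_{n-1}^*$ from \eqref{r&R&R*}, and then \eqref{r*&R*} to replace $\beta_n R_{n-1}$ and $\beta_n R_n$. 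After simplification, I expect the LHS of \eqref{r&R&R*} multiplied by $t$ and combined with $-2(t-1)r_n(r_n+\bt)$ to reproduce $l(r_n,r_n^*,t)$, yielding the clean identity
\[
\frac{c_n}{R_n} + \frac{c_{n-1}}{R_{n-1}} = \frac{l}{r_n(r_n+\bt)}.
\]
Eliminating $R_{n-1}$ through $R_{n-1} = c_{n-1}c_n\, r_n(r_n+\bt)/(kR_n)$ (a rearrangement of \eqref{r*&R*}) and clearing denominators then gives the first quadratic
\[
k R_n^2 - c_n\, l\, R_n + c_n^2\, r_n(r_n+\bt) = 0. \qquad \text{(A)}
\]

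For the differential quadratic, I write $(T_2)$ as $R_{n-1} = R_n - 2 + tk'/k$ (since $\beta_n'/\beta_n = k'/k$), substitute into $\beta_n R_n R_{n-1} = r_n(r_n+\bt)$, and multiply by $c_{n-1}c_n$ to obtain
\[
k R_n^2 + (tk' - 2k)R_n - c_{n-1}c_n\, r_n(r_n+\bt) = 0. \qquad \text{(B)}
\]
To exploit (B) I must express $tk'$ in terms of $k$, $l$, and $r_n'$. Differentiating the explicit formula for $k(r_n, r_n^*, t)$ and invoking \eqref{r&r-diff} (i.e.\ $tr_n^{*\prime}=(t-1)r_n'$) reduces this to verifying two polynomial identities purely in $r_n, r_n^*, t$:
\[
t\,\partial_t k = 2k - l, \qquad t\,\partial_{r_n} k + (t-1)\,\partial_{r_n^*} k = \hat c\, t(1-t),
\]
from which $tk' = 2k - l + \hat c\, t(1-t)\, r_n'$.

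Substituting this expression for $tk'$ into (B) and subtracting (A) cancels the $kR_n^2$ term; after using $c_n - 1 = \hat c$ and $c_n + c_{n-1} = 2\hat c$ the surviving equation factors as $R_n[l + t(1-t)r_n'] = 2c_n\, r_n(r_n+\bt)$, which is precisely \eqref{R-r-r*4}. Feeding this back into (A) to eliminate $c_n^2 r_n(r_n+\bt)$ and solving for $R_n$ then yields \eqref{R-r-r*3}. The main obstacle is the verification of the two partial-derivative identities for $k$; they are not conceptually deep but require patient monomial-by-monomial matching in the quartic polynomial $k$, and this is where most of the bookkeeping in the proof will live.
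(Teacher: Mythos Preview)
Your approach is correct and essentially coincides with the paper's own proof. Both derive the same pair of equations---one purely algebraic from combining \eqref{r*&R*}, \eqref{r&R&R*}, \eqref{r-r3} and \eqref{beta-r-r*}, and one differential from $(T_2)$ together with \eqref{r&r-diff}---and then solve them as a linear system in $R_n$ and $1/R_n$; your quadratics (A) and (B) are exactly the paper's equations \eqref{R-r-r*} and \eqref{R-r-r*2} after multiplying through by $c_nR_n$ and $-R_n$ respectively. The only difference is organizational: the paper differentiates the explicit polynomial expression for $\beta_n$ and substitutes $(T_2)$ and \eqref{r*&R*} directly, whereas you first isolate $R_{n-1}$ from $(T_2)$ and then package the computation of $tk'$ through the two clean partial-derivative identities $t\,\partial_t k = 2k - l$ and $t\,\partial_{r_n}k + (t-1)\,\partial_{r_n^*}k = \hat c\, t(1-t)$. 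This is a slightly tidier bookkeeping device (it makes transparent why $l$ appears and why the $r_n'$ coefficient is exactly $\hat c\, t(1-t)$), but the underlying calculation and ingredients are identical to the paper's.
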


\begin{proof}

Using (\ref{r*&R*}), we eliminate $R_{n-1}$ in \eqref{btn-2} and
obtain
\begin{equation*}
\begin{aligned}
&(2n+1+\al+\bt+\gamma)\frac{r_n^2+\bt\:r_n}{R_n}+(2n-1+\al+\bt+\gamma)\bt_nR_n
\\
&=t[(2n+\beta+\gamma)r_n-(2n+\alpha+\gamma)r_n^*+2r_nr_n^*-n(n+\gamma)]
-2(t-1)(r_n^2+\bt r_n).
\end{aligned}
\end{equation*}
Replacing $\bt_n$ in the above formula with the aid of
(\ref{beta-r-r*}), we have
\begin{align}
&\frac{2n+1+\al+\bt+\gamma}{R_n}(r_n^2+\bt r_n)+ \frac{k(r_n,
r_n^*,t)}{2n+1+\al+\bt+\gamma}R_n
\nonumber\\
&= 2 (1-t)r_n^2 + [(2n-\bt+\gamma)t+2\bt+ 2r_n^*t]r_n - (2n +
\alpha+\gamma)tr_n^* - n(n+\gamma)t. \label{R-r-r*}
\end{align}

On the other hand, by applying $t\frac{d}{dt}$ to (\ref{beta-r-r*}),
it follows
\begin{align*}
(2&n-1+\al+\bt+\gamma)(2n+1+\al+\bt+\gamma)t\frac{d}{dt}\bt_n \\
=& \ t\Big[ 2t(r_n^*)^2 + 2(1-2t)r_nr_n^*
+2(t-1)r_n^2+\big(2n-\bt+\gamma-(4n+2\gamma)t\big)r_n \\
& \ \ \ -\big(2n+\al+\gamma-(4n+2\gamma)t\big)r_n^*-(t-1)(2nt+\gamma
t+\beta)\frac{d}{dt}r_n \\
& \ \ \ +t\big((t-1)(2n+\gamma)-\al
\big)\frac{d}{dt}r_n^*+n(n+\gamma)(2t-1)\Big ],
\end{align*}
where we have used \eqref{r&r-diff}. A further substitution of
(\ref{r*&R*}), (\ref{beta-r-r*}) and $(T_2)$ into the above formula
yields
\begin{align}
& \frac{(2n-1+\al+\bt+\gamma)(2n+1+\al+\bt+\gamma)}{R_n}(r_n^2+\bt
r_n)
- k(r_n, r_n^*,t)R_n   \nonumber \\
&  = 2 (t-1)r_n^2 - [(2n-\bt+\gamma)t+2\bt+ 2r_n^*t]r_n + (2n +
\alpha+\gamma)tr_n^* \nonumber \\
&\hspace{66pt}+
n(n+\gamma)t+t(1-t)(2n+\al+\bt+\gamma)\frac{d}{dt}r_n.
\label{R-r-r*2}
\end{align}
Formulas \eqref{R-r-r*3} and \eqref{R-r-r*4} now follow from solving
for $R_n$ and $1/R_n$ from (\ref{R-r-r*}) and (\ref{R-r-r*2}).
\end{proof}

Now we are ready to finish the proof of Theorem \ref{thm-hn}.

\smallskip

\noindent\emph{Proof of Theorem \ref{thm-hn}}. Multiplying
(\ref{R-r-r*3}) and (\ref{R-r-r*4}) gives us
\begin{equation} \label{r-r*}
t^2(t-1)^2 [r_n'(t)]^2 = l^2(r_n, r_n^*,t) - 4 k(r_n,
r_n^*,t)(\bt+r_n) r_n,
\end{equation}
where $l(r_n, r_n^*,t)$ and $k(r_n, r_n^*,t)$ are given in
(\ref{l-r-r*}) and (\ref{k-r-r*}), respectively. Recall that $r_n^*$
and $r_n$ can be written in terms of $H_n$ and $H_n'$, see
(\ref{r*-hn}) and (\ref{r-hn}). Therefore, the above formula
actually gives us a non-linear differential equation for $H_n$.
Using (\ref{htil-def}) to replace $H_n$ by $\widetilde{H}_n$, we
finally get (\ref{sigmaform}), which completes the proof of our
theorem. \hfill $\Box$


\section{Proof of Theorem \ref{thm-beta}}\label{Proof of Theorem 2}

We conclude this paper with the proof of Theorem \ref{thm-beta}.

\smallskip

\noindent\emph{Proof of Theorem \ref{thm-beta}}. Firstly, we try to
express $r_n^*$ in terms of $r_n$, $r_n'$, $R_n$ and $R_n'$. To
achieve this, we substitute (\ref{alpha-rs}) into $(T_1)$ and get an
equation involving $r_n$, $r_n'$, $r_n^*$, ${r_n^*}'$, $R_n'$ and
$r_{n+1}$. Then we use (\ref{r-r2}) and \eqref{r&r-diff} to
eliminate $r_{n+1}$ and ${r_n^*}'$. At the end, we arrive at
\begin{align}
r^*_n  = &~ \frac{1}{2} + \frac{1}{2R_n}\biggl( (t-1)R_n' - 2r_n -
(\al + \bt +1) \biggr) + \frac{1}{2t R_n} \biggl( (2n + \al + \bt +
\gamma + 2)\nonumber\\
&\times(2r_n - R_n + \bt) + (R_n + 1) [ 2(t-1)r_n - (t-1)R_n + (\al
+ \bt + 1)t - \bt ] \biggr)
\end{align}

Next, we insert the above formula into (\ref{R-r-r*3}) and
(\ref{R-r-r*4}) and obtain a pair of \emph{linear} equations in
$r_n$ and $r_n'$. Solving this linear system gives us
\begin{align}
r_n  = F(R_n, R_n') \qquad \textrm{and} \qquad r_n'  = G(R_n, R_n'),
\end{align}
where $F(\cdot, \cdot)$ and $G(\cdot, \cdot)$ are functions that can
be explicitly computed. Because the expressions are too complicated,
we have decided not to write them down. Due to the fact that $\D
\frac{d}{dt} F(R_n, R_n') = r_n' = G(R_n, R_n')$, it can be shown
that
\begin{align}
\biggl[&(2n + \al + \bt + \gamma )(2n + \al + \bt + \gamma +1) +
\biggl((2n + \al + \bt + \gamma +1)t - 2(2n + \al + \bt + \gamma )
\nonumber \\
-& 1 \biggr) R_n(t) - (t-1)R_n^2(t) + t(t-1)R_n'(t)\biggr] \Phi(R_n,
R_n', R_n'')=0, \label{Rn-odes}
\end{align}
where $\Phi(\cdot, \cdot, \cdot)$ is a functions that is explicitly
known. Obviously, the above formula yields two differential
equations. One is a first order differential equation, actually a
Riccati equation, whose solution is given by
\begin{equation}
R_n(t) = \frac{(2n+ \al + \bt + \gamma +1) (1+ \l (2n+ \al + \bt +
\gamma +1)(1-t)^{2n + \al + \bt + \gamma})}{1+ \l (2n+ \al + \bt +
\gamma +1)(1-t)^{2n + \al + \bt + \gamma+1}},
\end{equation}
where $\l$ is an integration constant. However, as $t \to -\infty$,
it is easily seen that
\begin{equation}
R_n(t) \to \begin{cases} 0, & \textrm{ if } \l \neq 0 \\
2n+ \al + \bt + \gamma +1,  & \textrm{ if } \l = 0,
\end{cases}
\end{equation}
which violates the result $R_0(t) \sim \al + \bt + 1$ in
\eqref{R0-asy}. So we discard this Riccati equation.

Finally, applying a suitable rescaling and displacement as given in
\eqref{wn-rn}, we obtain the Painlev\'e VI equation \eqref{w-ode}
from $\Phi(\cdot, \cdot, \cdot) = 0 $ in \eqref{Rn-odes}. And this
finishes the proof of our theorem. \hfill $\Box$

\section*{Acknowledgement}
We thank the referees for providing constructive comments to improve
the contents of this paper.


\end{document}